\newtheorem{deff}{Definition}[section]
\newtheorem{theorem}[deff]{Theorem}
\newtheorem{corollary}[deff]{Corollary}
\newtheorem{proposition}[deff]{Proposition}
\newtheorem{em-example}[deff]{Example}
\newtheorem{em-def}[deff]{Definition}        
\newtheorem{em-remark}[deff]{Remark}         
\newtheorem{em-question}[deff]{Question}
\newtheorem{problem}[deff]{Problem}
\theoremstyle{definition}
\newtheorem{definition}[deff]{Definition}
\newenvironment{example}{\begin{em-example} \em }{ \end{em-example}}
\newenvironment{remark}{\begin{em-remark} \em }{\end{em-remark}}
\newcommand{\Ra}{\Rightarrow}
\newcommand{\w}{\omega}
\newcommand{\e}{\varepsilon}
\newcommand{\IR}{\mathbb R}
\newcommand{\IN}{\mathbb N}
\newcommand{\cs}{\mathsf{cs}}
\DeclareMathSymbol{\res}{\mathord}{AMSa}{"16}
\def\:{\nobreak \hskip .1111em\mathpunct {}\nonscript \mkern
   -\thinmuskip {:}\hskip .3333emplus.0555em\relax}
\def\R{{\mathbb R}}
\begin{document}

\title{$\w^\w$-Base and infinite-dimensional compact sets in locally convex spaces}
\author{Taras Banakh}
\address{Ivan Franko National University of Lviv (Ukraine) and Jan Kochanowski University in Kielce (Poland)}

\email{t.o.banakh@gmail.com}
\author{Jerzy K\c akol}
\address{Faculty of Mathematics and Informatics, A. Mickiewicz University  61-614 Pozna{\'{n}} (Poland), and Institute of Mathematics Czech Academy of Sciences, Prague (Czech Republic)  }

\email{kakol@amu.edu.pl}
\author{Johannes Phillip Sch\"urz}
\address{Faculty of Mathematics and Geoinformation, TU Wien,
 1040 Wien, (Austria)}
\email{johannes.schuerz@tuwien.ac.at}
\begin{abstract}
A locally convex space (lcs) $E$ is said to have an $\omega^{\omega}$-base if $E$ has a neighborhood base $\{U_{\alpha}:\alpha\in\omega^\omega\}$ at zero such that $U_{\beta}\subseteq  U_{\alpha}$ for all $\alpha \leq\beta$. The class of lcs with an $\omega^{\omega}$-base is large, among others contains all $(LM)$-spaces (hence $(LF)$-spaces), strong duals of distinguished Fr\'echet lcs (hence spaces of distributions $D'(\Omega)$).
A remarkable result of Cascales-Orihuela states that every compact set in a lcs with an $\omega^{\omega}$-base is metrizable. Our main result shows that every uncountable-dimensional lcs with an $\omega^{\omega}$-base contains an infinite-dimensional metrizable compact subset.  On the other hand, the countable-dimensional space $\varphi$ endowed with the finest locally convex topology has an $\w^\w$-base but contains no infinite-dimensional compact subsets. It turns out that $\varphi$ is a unique infinite-dimensional locally convex space which is a $k_\IR$-space containing no infinite-dimensional compact subsets. Applications to spaces $C_{p}(X)$ are provided.
\end{abstract}

\thanks{The research for the second  named author is supported  by the GA\v{C}R project 20-22230L and RVO: 67985840.  }

\maketitle

\section{Introduction}

A topological space $X$  is said  to have a neighborhood  $\omega^{\omega}$-base  at a point $x\in X$ if there exists a
neighborhood base $(U_{\alpha}(x))_{\alpha\in\w^{\w}}$  at $x$ such that $U_{\beta}(x)\subseteq U_{\alpha}(x)$ for all  $\alpha\leq\beta$ in $\w^{\w}$. We say
that $X$  has an $\omega^{\omega}$-base if it has a neighborhood  $\omega^{\omega}$-base at each point of $X$. Evidently,
a topological group (particularly topological vector space (tvs))  has an  $\omega^{\omega}$-base if it has a neighborhood $\omega^{\omega}$-base at the identity.
 The classical metrization theorem of Birkhoff and Kakutani states that a
topological group $G$ is metrizable if and only if $G$  is first-countable. Then, as easily seen, if $(U_{n})_{n\in\w}$ is a neighborgood base at the identity of $G$, then the family $\{U_{\alpha}:\alpha\in\w^{\w}\}$ formed by sets $U_{\alpha}= U_{\alpha(0)}$ forms an  $\omega^{\omega}$-base (at the identity) for $G$.  Locally convex spaces (lcs)  with an $\omega^{\omega}$-base  are known in Functional Analysis since 2003 when Cascales, K\c akol, and
Saxon \cite{cas-kak-saxon} characterized quasi-barreled lcs with an $\omega^{\omega}$-base. In several papers (see \cite{kak} and the references therein) spaces with an $\omega^{\omega}$-base
were studied under the name \emph{lcs with a $\mathfrak{G}$-base},  but  here we prefer (as in \cite{banakh}) to use  the more self-suggesting
terminology of $\omega^{\omega}$-bases.

In \cite{CasOri} Cascales and Orihuela proved that compact subsets of any lcs with an $\omega^{\omega}$-base are metrizable. This refers, among others, to each
 $(LM)$-space, i.e. a countable inductive limit of metrizble lcs, since $(LM)$-spaces have an $\omega^{\omega}$-base. Also the following \emph{metrization theorem} holds together a number of topological conditions.

 \begin{theorem}\label{baire-like} \cite[Corollary 15.5]{kak}  For a barrelled lcs $E$  with an $\omega^{\omega}$-base, the following conditions are equivalent.
\begin{enumerate}
\item $E$ is metrizable;
\item  $E$ is Fr\'echet-Urysohn;
\item  $E$ is Baire-like;
\item  $E$ does not contain a   copy of $\varphi$, i.e. an  $\aleph_{0}$-dimensional vector space endowed with the finest locally convex topology.
\end{enumerate}
\end{theorem}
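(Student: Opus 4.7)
My strategy is to close the chain $(1) \Rightarrow (2) \Rightarrow (4) \Rightarrow (3) \Rightarrow (1)$, with only the last implication making substantial use of the $\omega^\omega$-base. The easy directions are: $(1) \Rightarrow (2)$, since metrizable implies first countable implies Fr\'echet--Urysohn; $(1) \Rightarrow (3)$, by Saxon's theorem that every metrizable barrelled lcs is Baire-like; $(1) \Rightarrow (4)$, since every subspace of a metrizable lcs is metrizable while $\varphi$ is not; and $(2) \Rightarrow (4)$ together with $(3) \Rightarrow (4)$, which exploit two classical pathologies of $\varphi$ itself --- namely that $\varphi$ is neither Fr\'echet--Urysohn nor Baire-like (the latter because the chain $\varphi_n := \mathrm{span}(e_1,\dots,e_n)$ consists of closed absolutely convex sets covering $\varphi$ with no $\varphi_n$ a neighborhood of $0$) --- together with the hereditary character of these properties to closed barrelled subspaces and the fact that any abstract linear copy of $\varphi$ inside a lcs is automatically a topological copy, since no lcs topology refines the strongest one.

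The heart of the argument is $(4) \Rightarrow (1)$. Working contrapositively, fix an $\omega^\omega$-base $\{U_\alpha\}_{\alpha \in \omega^\omega}$ of closed absolutely convex neighborhoods of $0$ in a non-metrizable barrelled $E$. Since $E$ is not first countable, no countable subfamily of $\{U_\alpha\}$ is a base; equivalently, no countable subset of $\omega^\omega$ is cofinal modulo the equivalence $\alpha \sim \beta \iff U_\alpha = U_\beta$. Using the tree structure of $\omega^{<\omega}$, I would inductively build a linearly independent sequence $(x_n) \subseteq E$ together with indices $\alpha_n \in \omega^\omega$ satisfying: (i) $\alpha_n$ strictly dominates all previously chosen indices while witnessing the failure of countable cofinality, and (ii) $x_n \in U_{\alpha_{n-1}}$ is separated from $\mathrm{span}(x_1,\dots,x_{n-1})$ by a barrel furnished by barrelledness of $E$. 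Together (i) and (ii) should force the subspace $F := \mathrm{span}\{x_n : n \in \omega\}$, with its trace topology from $E$, to carry the finest locally convex topology on it, i.e.\ to be a topological copy of $\varphi$.

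The main obstacle is precisely the inductive construction in $(4) \Rightarrow (1)$. One must leverage the failure of countable cofinality of $(\omega^\omega, \leq)$ together with repeated appeals to barrelledness so that every absolutely convex absorbing subset of $F$ becomes automatically a neighborhood in its trace topology --- i.e.\ the topology on $F$ coincides with the finest lcs topology. This delicate combinatorial--topological diagonalisation through $\omega^{<\omega}$ is where both the $\omega^\omega$-base hypothesis and barrelledness enter essentially, following the Saxon-type constructions of $\varphi$ in the Cascales--K\c akol--Saxon framework cited in the introduction.
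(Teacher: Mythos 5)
The paper does not actually prove this theorem: it is quoted from \cite[Corollary 15.5]{kak}, so your proposal can only be measured against the standard Cascales--K\c akol--Saxon argument. Your overall architecture is reasonable, and the genuinely easy implications are handled correctly: $(1)\Ra(2)$, $(1)\Ra(3)$ (Saxon: metrizable barrelled implies Baire-like), $(1)\Ra(4)$, and $(2)\Ra(4)$ (Fr\'echet--Urysohn is hereditary to all subspaces and $\varphi$ is not Fr\'echet--Urysohn). But the two substantive implications you rely on both have gaps. For $(3)\Ra(4)$ you invoke ``the hereditary character of these properties to closed barrelled subspaces.'' Baire-likeness is not obviously inherited by closed subspaces: the witness to non-Baire-likeness of a copy $F\cong\varphi$ (the increasing sequence of finite-dimensional subspaces $F_n$) covers only $F$, not $E$, and enlarging it to an increasing sequence of closed absolutely convex sets covering $E$ without accidentally producing neighborhoods of zero is precisely the nontrivial content. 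The implication is true --- for barrelled spaces Saxon's theorem is in fact the equivalence ``Baire-like $\Leftrightarrow$ no copy of $\varphi$'' --- but heredity as you state it is an assertion, not a proof.

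The more serious gap is $(4)\Ra(1)$, which you yourself flag as the heart. The known proof does not manufacture a copy of $\varphi$ directly from non-metrizability; it pivots through Baire-likeness: (a) a barrelled space with an $\w^\w$-base which is Baire-like is metrizable --- this is where the base enters, via the countable family of tails $\bigcap_{\beta\in{\uparrow}s}U_\beta$, $s\in\w^{<\w}$ (compare Proposition~\ref{p:bn}); and (b) a barrelled non-Baire-like space contains $\varphi$ --- Saxon's construction, which starts from an increasing sequence $(A_n)$ of closed absolutely convex non-neighborhoods of zero covering $E$. Your single induction tries to merge (a) and (b), and conditions (i)--(ii) as stated cannot suffice: to show that $F=\mathrm{span}\{x_n\}$ carries the finest locally convex topology you must exhibit, for \emph{each} of continuum many positive sequences $(\e_k)$, a neighborhood $U$ of zero in $E$ with $U\cap F\subseteq \mathrm{aco}\{\e_k x_k\}$, whereas your induction records only countably many separations of $x_n$ from $\mathrm{span}(x_1,\dots,x_{n-1})$ (each of which, incidentally, needs only Hahn--Banach, not barrelledness). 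In Saxon's argument the required uniformity over all $(\e_k)$ comes exactly from the sets $A_n$, which your sketch never constructs; converting the failure of countable cofinality of the $\w^\w$-base into such a sequence $(A_n)$ is the missing idea, and it is the step where the statement ``should force $F$ to be a copy of $\varphi$'' currently stands in for an argument.
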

Hence every Baire lcs with an $\omega^{\omega}$-base is metrizable. The space $\varphi$ appearing in Theorem~\ref{baire-like} has the following properties: 
\begin{enumerate}
\item  $\varphi$ is the strong dual of the Fr\'echet-Schwartz space $\mathbb{R}^{\omega}$.
\item  All compact subsets in $\varphi$ are  finite-dimensional.
\item  $\varphi$ is a complete bornological space,
\end{enumerate}
 see \cite{saxon}, \cite{nyikos}, \cite{kak}.

Being motivated by above's results, especially by a remarkable theorem of Cascales-Oruhuela mentioned above, one can ask  for  a possible large class of lcs $E$ for which  every infinite-dimensional subspace of $E$ contains an infinite-dimensional compact (metrizable) subset.  Surely, each  metrizable lcs trivially fulfills this request. We prove however the following general

\begin{theorem}\label{t:main}
Every uncountably-dimensional lcs $E$ with $\omega^{\omega}$-base contains  an infinite-dimensional metrizable compact subset.
\end{theorem}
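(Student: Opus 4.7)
By the Cascales--Orihuela theorem recalled in the introduction, every compact subset of an lcs with an $\omega^\omega$-base is automatically metrizable, so the problem reduces to producing an infinite-dimensional compact subset of $E$. A convenient sufficient form is to exhibit a sequence $(x_n)_{n\in\omega}$ of linearly independent vectors with $x_n\to 0$: the set $K=\{0\}\cup\{x_n:n\in\omega\}$ is then compact and of infinite linear dimension. I would further reduce this to producing an infinite-dimensional \emph{bounded} subset of $E$, because if $(y_n)$ is a bounded linearly independent sequence then $x_n:=y_n/n$ is a null sequence: for every absolutely convex neighborhood $U$ of $0$ boundedness yields $\lambda>0$ with $y_n\in\lambda U$ for all $n$, so $x_n\in U$ as soon as $n\geq\lambda$.

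The heart of the theorem is therefore the lemma that \emph{every uncountable-dimensional lcs $E$ with an $\omega^\omega$-base contains an infinite linearly independent bounded set}. To prove it I would work with an absolutely convex $\omega^\omega$-base $(U_\alpha)_{\alpha\in\omega^\omega}$ (available after passing to absolutely convex subneighborhoods) with Minkowski functionals $p_\alpha$, together with an uncountable linearly independent family $\{v_\xi:\xi<\aleph_1\}\subseteq E$ extracted from a Hamel basis. The key combinatorial feature of $\omega^\omega$ to exploit is its \emph{bounding property}: every countable subset of $\omega^\omega$ is bounded above, so every countable subfamily $\{U_{\alpha_n}\}$ of the base is refined by a single $U_\beta$, equivalently $p_\beta\geq p_{\alpha_n}$ for every $n$. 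For each fixed $\alpha$, pigeon-hole on the countable partition $\{v_\xi:p_\alpha(v_\xi)\in[n,n+1)\}_{n\in\omega}$ of the uncountable index set $\aleph_1$ supplies an uncountable reservoir of $\alpha$-bounded candidates. I would then construct the desired bounded linearly independent sequence $(y_n)$ by a diagonal induction: at stage $n$, replace the currently relevant seminorm by the Minkowski functional $p_{\beta_n}$ of an upper bound of all previously used indices, restrict to an uncountable $\beta_n$-bounded subfamily of the current reservoir, and select the next $y_n$ linearly independent from the previously chosen ones out of that reservoir.

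The principal obstacle will be promoting the boundedness of $(y_n)$ along the countable chain $(\beta_n)$ to boundedness of $(y_n)$ with respect to \emph{every} $p_\alpha$, $\alpha\in\omega^\omega$, because $\omega^\omega$ admits no countable cofinal subset and so a naive diagonalization does not suffice. Overcoming this obstacle is where the $\omega^\omega$-base property must be invoked globally: for any given $\alpha\in\omega^\omega$ the countable set $\{\alpha\}\cup\{\beta_n:n\in\omega\}$ has an upper bound $\beta\in\omega^\omega$ whose Minkowski functional $p_\beta$ dominates $p_\alpha$, and the induction must be arranged so that this $p_\beta$ is itself controlled on $(y_n)$, thereby forcing $p_\alpha$ to be controlled on $(y_n)$. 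The interaction of uncountable dimension (which ensures uncountable $\alpha$-bounded reservoirs at every stage) with the $\omega^\omega$-bounding property (which glues countable witnesses into global ones) is the technical core of the argument. Once the bounded infinite linearly independent set $(y_n)$ is obtained, scaling and the Cascales--Orihuela theorem combine as above to deliver the required infinite-dimensional metrizable compact subset of $E$.
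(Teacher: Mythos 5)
Your outer reduction is sound and matches the paper: Cascales--Orihuela handles metrizability, and by the paper's Theorem~\ref{t1} an infinite bounded linearly independent set yields an infinite-dimensional compact set (note only that your $K=\{0\}\cup\{x_n\}$ is zero-dimensional as a topological space; to get infinite \emph{topological} dimension one uses the union-of-cubes set $\bigcup_n\{\sum_{k=n}^{2n}t_kx_k: t_k\in[0,2^{-k}]\}$ from the proof of Theorem~\ref{t1}, but the span-dimension reading is equivalent by that same theorem). The core lemma you isolate --- every uncountable-dimensional lcs with an $\omega^{\omega}$-base contains an infinite bounded linearly independent set --- is exactly what the paper proves. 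Your argument for it, however, has a genuine gap, and it starts with a false premise: the ``bounding property'' you invoke does not hold. The order on $\omega^{\omega}$ in the definition of an $\omega^{\omega}$-base is \emph{everywhere} coordinatewise domination, and $(\omega^{\omega},\le)$ is not countably directed (the functions $f_n$ with $f_n(0)=n$ have no upper bound). Concretely, a countable subfamily $\{U_{\alpha_n}\}$ of the base need not be refined by any single $U_{\beta}$: in the metrizable space $\mathbb{R}^{\omega}$ (which has an $\omega^{\omega}$-base) the neighborhoods $U_n=\{x:|x_n|<1\}$ have no common refining neighborhood of zero, and the same failure occurs already in $\varphi$. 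So the inductive scheme built on ``take an upper bound of all previously used indices'' cannot be run. Moreover, even granting some form of countable directedness, you yourself flag but do not resolve the ``principal obstacle'': a countable induction controls only countably many seminorms, while boundedness must be verified against all $\alpha\in\omega^{\omega}$, an index set with no countable cofinal subset. As written, the technical core of the proof is named but not supplied.

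What actually closes this gap in the paper is a different mechanism: one extracts from the $\omega^{\omega}$-base a \emph{countable} auxiliary structure and then runs a pigeonhole argument in a separable metrizable space. In one version (Theorem~\ref{follows}), each null sequence $x_{\alpha}$ ($\alpha\in\omega_1$) is coded by a monotone function $\mu_{\alpha}:\omega^{\omega}\to\omega$, which by \cite[Lemma 2.3.5]{banakh} admits a continuous majorant $\delta_{\alpha}\in C_p(\omega^{\omega},\omega)$; since $C_p(\omega^{\omega},\omega)$ has a countable network, among $\omega_1$ many codes there is a nontrivial convergent sequence, and convergence of the codes translates into uniform control of the $x_{\alpha_n}$ over \emph{every} $U_f$ simultaneously. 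In the other version (Proposition~\ref{p:bn} plus Theorems~\ref{t:network} and~\ref{t:rn}), the sets $\bigcap_{\beta\in\uparrow\alpha}U_{\beta}$, $\alpha\in\omega^{<\omega}$, form a countable $\mathsf{s}^*$-network (equivalently one uses a countable radial network), each point of an uncountable set is coded by an element of $(\omega+1)^{\mathcal N}$, and a convergent subsequence of codes yields an infinite bounded subset. Either way, the step your proposal is missing is precisely this replacement of the uncountably-indexed, non-$\sigma$-directed base by a countable network-type object on which a Ramsey/separability argument can be performed.
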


Theorem~\ref{t:main} will be proved in Section 4. An alternative proof will be presented in Section 5 as a consequence of Theorem \ref{t:main3}.

The uncountable dimensionality of the space $E$  in Theorem~\ref{t:main} cannot be replaced by the infinite-dimensionality of $E$: the space $\varphi$ is infinite-dimensional, has an $\w^\w$-base and contains no infinite-dimensional compact subsets. However, $\varphi$ is a unique locally convex $k_\IR$-space with this property. Recall \cite{kR} that a topological space $X$ is a {\em $k_\IR$-space} if a function $f:X\to\IR$ is continuous whenever for every compact subset $K\subseteq X$ the restriction $f{\restriction}K$ is continuous. We prove the following

\begin{theorem}\label{t:main2} A lcs $E$ is topologically isomorphic to the space $\varphi$ if and only if $E$ is a $k_\IR$-space containing no infinite-dimensional compact subsets.
\end{theorem}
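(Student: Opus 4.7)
The ``only if'' direction is immediate from facts already noted in the introduction: $\varphi$ is the strict inductive limit $\varinjlim_n\R^n$, hence a $k$-space (a fortiori a $k_\IR$-space), and each of its compact subsets lies inside some $\R^n$ and is therefore finite-dimensional.

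For the nontrivial direction, assume that $E$ is an infinite-dimensional lcs which is a $k_\IR$-space and in which every compact subset is finite-dimensional. I would first show that $E$ carries the finest locally convex topology. Given any seminorm $p$ on $E$, its restriction to every finite-dimensional subspace $F\subseteq E$ is continuous (seminorms on finite-dimensional Hausdorff topological vector spaces are automatically continuous); since by hypothesis every compact $K\subseteq E$ lies in some finite-dimensional subspace, $p$ is continuous on every compact subset of $E$. The $k_\IR$-property then forces $p$ itself to be continuous, so $E$ carries the finest locally convex topology.

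Next I would prove that $\dim E=\aleph_0$. Assuming towards a contradiction that $E$ has uncountable dimension, fix a Hamel basis $\{e_\alpha\}_{\alpha\in I}$ with $|I|>\aleph_0$ and identify $E$ with $\bigoplus_{\alpha\in I}\R e_\alpha$ equipped with the finest locally convex topology. The plan is to exhibit a function $f\colon E\to\R$ that is continuous on every finite-dimensional subspace (hence on every compact subset of $E$) but discontinuous at $0$, contradicting $k_\IR$. The concrete candidate is
\[
 f(y)\;=\;\sum_{\alpha\in I}\sqrt{\,|c_\alpha(y)|\,},
\]
where $c_\alpha(y)$ denotes the $\alpha$-th coordinate of $y$ in the chosen basis (the sum is finite since $\supp y$ is finite). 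On any finite-dimensional subspace $f$ reduces to a finite sum of continuous functions and is therefore continuous. In $E$, every zero-neighborhood contains one of the form $V_{(\eta_\alpha)}=\{\,y : \sum_\alpha |c_\alpha(y)|/\eta_\alpha<1\,\}$ with $\eta_\alpha>0$, since these are unit balls of seminorms that form a base in the finest lcs topology. A Cauchy--Schwarz computation (attained by $|c_\alpha|$ proportional to $\eta_\alpha^{2}$ on a finite support) gives
\[
 \sup_{y\in V_{(\eta_\alpha)}} f(y)\;=\;\sqrt{\,\textstyle\sum_{\alpha\in I}\eta_\alpha\,},
\]
and the right-hand side is infinite because an uncountable sum of strictly positive reals is infinite; thus $f$ is unbounded on every zero-neighborhood and is discontinuous at $0$.

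Once $\dim E=\aleph_0$ is established, the identification $E\cong\varphi$ is immediate: any linear bijection between two countably-infinite-dimensional locally convex spaces carrying the finest locally convex topology is automatically a topological isomorphism. The main difficulty is the second step---the choice of the test function $f$. Every continuous seminorm and every smooth algebraic combination of coordinates is automatically continuous in the finest lcs topology and so cannot witness the failure of $k_\IR$; the concavity built into the square-root is essential to simultaneously secure continuity on each finite-dimensional slice and unboundedness on every zero-neighborhood of $E$.
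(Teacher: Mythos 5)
Your proof is correct, and its second half takes a genuinely different route from the paper's. The first half coincides in substance with the paper's argument: the paper fixes a Hamel basis $B$, forms the canonical continuous bijection $T:L(B)\to E$, and uses the $k_\IR$-property together with Theorem~\ref{t1} (every compact set spans a finite-dimensional subspace) to show that $T^{-1}$ is continuous, i.e.\ that $E$ carries the finest locally convex topology; your seminorm formulation is the same step in different clothing. The divergence is in proving that the Hamel basis must be countable. The paper outsources this to Gabriyelyan's theorem \cite{Gab} that the free lcs $L(X)$ is Ascoli (in particular a $k_\IR$-space) only when $X$ is countable and discrete. You instead produce an explicit witness: the function $f(y)=\sum_\alpha\sqrt{|c_\alpha(y)|}$ is continuous on every finite-dimensional subspace, hence on every compact subset of $E$, yet by the Cauchy--Schwarz estimate $\sup_{V_{(\eta_\alpha)}}f=\bigl(\sum_\alpha\eta_\alpha\bigr)^{1/2}=\infty$ it is unbounded on every basic zero-neighborhood once the index set is uncountable, since an uncountable sum of strictly positive reals diverges. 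The computation checks out (the sets $V_{(\eta_\alpha)}$ do form a neighborhood base at zero for the finest locally convex topology on a direct sum of lines), so your argument makes the ``if'' direction elementary and self-contained, needing only the $k_\IR$-property for real-valued functions, whereas the paper's citation is shorter and connects the result to the Ascoli property, motivating the open problem posed right after the theorem. One cosmetic point: the hypothesis must include infinite-dimensionality of $E$, as in the restatement of the theorem in Section~3 (a finite-dimensional space also satisfies the stated conditions); you correctly inserted this assumption.
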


Theorem~\ref{t:main2} implies that a lcs is topologically isomorphic to $\varphi$ if and only if it is homeomorphic to $\varphi$. This topological uniqueness property of the space $\varphi$ was first proved by the first author in \cite{Ban98}.

The following characterization of the space $\varphi$ can be derived from Theorems~\ref{t:main} and \ref{t1}. It shows that $\varphi$ is a unique bornological space for which the uncountable dimensionality in Theorem~\ref{t:main} cannot be weakened to infinite dimensionality.

\begin{theorem} A lcs $E$ is topologically isomorphic to the space $\varphi$ if and only if $E$ is bornological, has an $\w^\w$-base and contains no infinite-dimensional compact subset.
\end{theorem}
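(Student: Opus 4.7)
The plan is to prove the biconditional in the two directions separately, with the converse broken into three steps. The forward direction consists in reading off properties of $\varphi$ recorded earlier in the paper: $\varphi$ carries the finest locally convex topology and so is bornological (every linear map out of $\varphi$ is continuous); $\varphi$ is the strong dual of the distinguished Fr\'echet--Schwartz space $\R^\omega$, hence has an $\omega^\omega$-base as noted in the introduction; and all compact subsets of $\varphi$ are finite-dimensional by property~(2) listed after Theorem~\ref{baire-like}.

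For the converse, suppose $E$ is bornological with an $\omega^\omega$-base and no infinite-dimensional compact subset. First I would apply Theorem~\ref{t:main}: if $\dim E$ were uncountable, then $E$ would contain an infinite-dimensional metrizable compact, contradicting the hypothesis. Hence $\dim E \le \aleph_0$; ignoring the degenerate finite-dimensional case I restrict attention to $\dim E = \aleph_0$.

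The key intermediate claim is that every bounded $B \subseteq E$ is finite-dimensional. I would argue by contradiction: pick a linearly independent sequence $(x_n)_{n \ge 1} \subseteq B$. Boundedness of $\{x_n\}$ gives $x_n/n \to 0$ in $E$, because for any absolutely convex $0$-neighborhood $U$ with $\{x_n\} \subseteq MU$ one has $x_n/n \in U$ as soon as $n \ge M$. Then $K = \{x_n/n : n \ge 1\} \cup \{0\}$ is a compact subset of $E$ whose linear span is infinite-dimensional, contradicting the hypothesis. Note that this step uses neither the $\omega^\omega$-base nor bornologicality.

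Finally I would use bornologicality: $U \subseteq E$ is a $0$-neighborhood iff $U$ is absolutely convex and absorbs every bounded set. By the previous step each bounded set lies in a finite-dimensional subspace $F$ (carrying its Euclidean topology), and on such an $F$ an absolutely convex set absorbs every bounded subset iff it is itself a $0$-neighborhood. Hence the $0$-neighborhoods of $E$ are exactly the absolutely convex $U \subseteq E$ for which $U \cap F$ is a $0$-neighborhood in every finite-dimensional $F \subseteq E$; this is the defining property of the finest locally convex topology on $E$. Therefore $E$ is an $\aleph_0$-dimensional vector space carrying the finest lc topology, and so $E \cong \varphi$. The one non-routine ingredient is the intermediate claim; the remainder is bookkeeping with the definition of bornologicality.
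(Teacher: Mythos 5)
Your overall route is exactly the paper's intended derivation: the paper gives no standalone proof of this theorem but states that it follows from Theorems~\ref{t:main} and \ref{t1}, namely the $\w^\w$-base together with Theorem~\ref{t:main} forces $\dim E\le\aleph_0$, and bornologicality together with the absence of infinite-dimensional compact sets forces, via Theorem~\ref{t1}, that $E$ carries the finest locally convex topology, whence $E\cong L(\w)=\varphi$. Your converse reproves the relevant implications of Theorem~\ref{t1} inline rather than citing them, and your last step (bornivorous absolutely convex sets are $0$-neighborhoods; bounded sets are finite-dimensional; hence the topology is the finest locally convex one) is the same as $(2)\Rightarrow(3)\Rightarrow(4)$ there.

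The one step that needs repair is the witness in your intermediate claim. In this paper ``infinite-dimensional compact subset'' is used in the sense of Theorem~\ref{t1}(1), i.e.\ infinite \emph{topological} dimension (compare also the proof of Theorem~\ref{t:main2}, which passes through Theorem~\ref{t1} precisely to convert finite topological dimension into finite-dimensional linear hull). Your set $K=\{x_n/n:n\ge1\}\cup\{0\}$ is a null sequence, hence a countable compact metrizable space of topological dimension zero, so under that reading it does not contradict the hypothesis; only its linear span is infinite-dimensional. The fix is exactly the construction in the paper's proof of $(1)\Rightarrow(2)$ of Theorem~\ref{t1}: from the bounded linearly independent sequence $(x_n)$ form $K=\bigcup_{n}\bigl\{\sum_{k=n}^{2n}t_kx_k:(t_k)_{k=n}^{2n}\in\prod_{k=n}^{2n}[0,2^{-k}]\bigr\}$, a compact set containing cubes of arbitrarily large dimension and therefore of infinite topological dimension; alternatively, simply invoke the equivalence $(1)\Leftrightarrow(2)$ of Theorem~\ref{t1}. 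With that substitution (and the explicit exclusion of the finite-dimensional case, which you already note and which the paper's statement also tacitly assumes), your argument is complete.
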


Theorem \ref{t:main} provides a large class of concrete (non-metrizable) lcs containing infinite-dimensional compact  sets.

\begin{corollary} \label{com}
Every  uncountable-dimensional subspace of an $(LM)$-space contains an infinite-dimensional compact set.
\end{corollary}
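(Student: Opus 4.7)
The plan is to reduce the corollary to Theorem~\ref{t:main} by verifying that the hypothesis of having an $\omega^\omega$-base passes from an $(LM)$-space to each of its linear subspaces. The introduction already records that every $(LM)$-space admits an $\omega^\omega$-base, so the only point to check is that this property is inherited by subspaces equipped with the induced topology.

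Concretely, let $E$ be an $(LM)$-space and fix a neighborhood $\omega^\omega$-base $\{U_\alpha:\alpha\in\omega^\omega\}$ at the origin of $E$ with $U_\beta\subseteq U_\alpha$ whenever $\alpha\le\beta$. If $F\subseteq E$ is any linear subspace, set $V_\alpha:=U_\alpha\cap F$. By the definition of the subspace topology, $\{V_\alpha:\alpha\in\omega^\omega\}$ is a neighborhood base at $0$ in $F$, and clearly $V_\beta\subseteq V_\alpha$ for $\alpha\le\beta$. Since $F$ is a topological vector space, translation by any $x\in F$ turns this into a neighborhood $\omega^\omega$-base at $x$, so $F$ itself has an $\omega^\omega$-base.

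Now assume $F$ is uncountable-dimensional. Applying Theorem~\ref{t:main} to $F$ produces an infinite-dimensional metrizable compact subset $K\subseteq F\subseteq E$, which is in particular an infinite-dimensional compact subset of $F$, as required. There is no genuine obstacle here: the content of the corollary is entirely contained in Theorem~\ref{t:main}, together with the elementary and standard fact that an $\omega^\omega$-base restricts to subspaces.
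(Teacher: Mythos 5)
Your proof is correct and follows exactly the route the paper intends: the corollary is stated without proof as an immediate consequence of Theorem~\ref{t:main}, using the facts that $(LM)$-spaces have an $\w^\w$-base (noted in the introduction) and that an $\w^\w$-base is inherited by linear subspaces. Your explicit verification that $\{U_\alpha\cap F\}_{\alpha\in\w^\w}$ is a monotone neighborhood base at zero of the subspace $F$ is exactly the missing routine step, so nothing further is needed.
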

 Let $X$ be a Tychonoff space. By $C_{p}(X)$ and $C_{k}(X)$  we denote the space of continuous real-valued functions on $X$ endowed with the pointwise and the compact-open topology, respectively.
The problem of characterization of  Tychonoff spaces $X$ whose function spaces $C_{p}(X)$ and $C_{k}(X)$ admit an $\w^{\w}$-base is already solved. Indeed, by \cite[Corollary 15.2]{kak} $C_{p}(X)$ has   an $\w^{\w}$-base if and only if $X$ is countable. The space $C_{k}(X)$
has an $\w^{\w}$-base if and only if $X$ admits a fundamental compact resolution \cite{feka}, for necessary definitions see below. Since every \v Cech-complete Lindel\"of space $X$ is a continuous image of a  Polish space under a perfect map (and the latter space admits a fundamental compact resolution), the space $C_{p}(X)$ has an  $\w^{\w}$-base. So, we have another  concrete application of Theorem \ref{t:main}.

\begin{example}
Let $X$ be an infinite \v Cech-complete Lindel\"of space. Then every uncountable-dimensional subspace of $C_{k}(X)$ contains an infinite-dimensional metrizable compact set.
\end{example}



 In Section 2 we show that all (bornological) lcs containing no infinite-dimensional compact subsets are  bornologically (and topologically) isomorphic to a  free lcs over discrete topological spaces. Consequently, in Sections 3 and 4 we study the free lcs $L(\kappa)$  over infinite cardinals $\kappa$, including  $L(\w)=\varphi$. We introduce two concepts: the  $(\kappa,\lambda)$-tall bornology and  the $(\kappa,\lambda)_p$-equiconvergence, which will be used  to obtain  bornological and topological characterizations of  $L(\kappa)$.  Both concepts  apply to prove  Theorem~\ref{t:main}. To this end, we shall prove that each topological (vector) space with an $\w^\w$-base is $(\w_1,\w)_p$-equiconvergent (and has $(\w_1,\w)$-tall bornology). Another property implying the $(\w_1,\w)_{p}$-equiconvergence is the existence of a countable $\cs^\bullet$-network (see Theorem \ref{t:network}), which follows from the existence of an $\w^\w$-base according to Proposition~\ref{p9}. Linear counterparts of $\cs^\bullet$-networks are radial networks introduced in Section~\ref{s:rn}, whose main result is Theorem \ref{t:main3} implying Theorem~\ref{t:main}. Some applications of Theorem~\ref{t:main} to function spaces $C_{p}(X)$  are provided in Section~\ref{s:Cp}.

\section{Locally convex spaces containing no infinite-dimensional compact subsets}

In this section we study lcs containing no infinite-dimensional compact subsets. We shall show that all such (bornological) spaces are bornologically (and topologically) isomorphic to free lcs over discrete topological spaces.

Recall that for a topological space $X$ its {\em free locally convex space} is a lcs $L(X)$ endowed with a continuous function $\delta:X\to L(X)$ such that for any continuous function $f:X\to E$ to a lcs  $E$ there exists a unique linear continuous map $T:L(X)\to E$ such that $T\circ\delta=f$. The set $X$ forms a
Hamel basis for $L(X)$ and $\delta$ is a topological embedding, see \cite{raikov}; we also refer  to \cite{BL} and \cite{banakh} for several results and references concerning this concept; \cite[Theorem 5.4]{BL} characterizes  those $X$ for which  $L(X)$ has an $\w^{\w}$-base.

Let $E$ be a tvs.  A subset $B\subseteq E$ is called {\em bounded} if for every neighborhood $U\subseteq E$ of zero there exists $n\in\IN$ such that $B\subseteq nU$. The family of all bounded sets of $E$ is called the {\em bornology} of $E$.
 A linear operator $f:E\to F$ between two tvs  is called {\em bounded} if for any bounded set $B\subseteq E$ its image $f(B)$ is bounded in $F$.

Two tvs $E$ and $F$ are
\begin{itemize}
\item {\em topologically isomorphic} if there exists a linear bijective function $f:E\to F$ such that $f$ and $f^{-1}$ are continuous;
\item {\em bornologically isomorphic} if  there exists a linear bijective function $f:E\to F$ such that $f$ and $f^{-1}$ are bounded.
\end{itemize}
A lcs $E$ is called  {\em bornological} if each bounded linear operator  from $E$ to a lcs $F$ is continuous.
A linear space $E$ is called {\em $\kappa$-dimensional} if $E$ has a Hamel basis of cardinality $\kappa$. In this case we write $\kappa=\dim(E)$.

A lcs $E$ is {\em free} if it carries the finest locally convex topology.
In this case $E$ is topologically isomorphic to the free lcs $L(\kappa)$ over the cardinal $\kappa=\dim(E)$ endowed with the discrete topology.

The study around  the free lcs $L(\w)=\varphi$  has attracted specialists  for a long time.  For example, Nyikos observed \cite{nyikos}  that each sequentially  closed subset of $L(\w)$ is closed although the sequential closure of a subset of $\varphi$ need not be closed. Consequently, $L(\w)$ is a concrete ``small" space without the Fr\'echet-Urysohn property. Applying the  Baire category theorem one shows that $L(\w)$ is not a Baire-like space (in sense of Saxon \cite{saxon}) and   a barrelled  lcs $E$ is Baire-like if $E$ does not contain a  copy of $L(\w)$, see \cite{saxon}. Although $L(\w)$ is not Fr\'echet-Urysohn, it provides some extra properties since all vector subspaces in $L(\w)$ are closed. In \cite{kakol-saxon} we introduced  the property for a lcs $E$ (under the name $C_{3}^{-}$) stating that the sequential closure of every linear subspace of $E$ is sequentially closed, and we proved \cite[Corollary 6.4]{kakol-saxon} that the only infinite-dimensional Montel (DF)-space with property  $C_{3}^{-}$ is $L(\w)$ (yielding a remarkable result of Bonet and Defant that the only infinite-dimensional Silva space with property $C_{3}^{-}$ is $L(\w)$). This implies that  barrelled $(DF)$-spaces and $(LF)$-spaces satisfying  property $C_{3}^{-}$ are exactly  of the form $M$, $L(\w)$, or
$M\times L(\w)$ where $M$ is metrizable, \cite[Theorems 6.11, 6.13]{kakol-saxon}.



The following simple theorem characterizes lcs containing no infinite-dimensional compact subsets.

\begin{theorem}\label{t1} For a lcs $E$ the following conditions are equivalent:
\begin{enumerate}
\item Each compact subset of $E$  has finite topological dimension.
\item Each bounded linearly independent set in $E$ is finite.
\item $E$ is bornologically isomorphic to a free lcs.
\end{enumerate}
If $E$ is bornological, then the conditions \textup{(1)--(3)} are equivalent to
\begin{itemize}
\item[(4)] $E$ is free.
\end{itemize}
\end{theorem}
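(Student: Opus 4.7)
The plan is to prove the cycle (2) $\Rightarrow$ (1) $\Rightarrow$ (2), (2) $\Rightarrow$ (3) $\Rightarrow$ (2), and then to upgrade (3) $\Leftrightarrow$ (4) under the additional bornological hypothesis. The pivotal observation is the reformulation of (2): \emph{every bounded subset of $E$ is contained in a finite-dimensional linear subspace}. Indeed, from any bounded set $B$ whose linear span were infinite-dimensional we could greedily pick $v_{n+1}\in B\setminus\hull{v_1,\dots,v_n}$, producing a bounded infinite linearly independent sequence and contradicting (2).

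For (1) $\Leftrightarrow$ (2): the implication (2) $\Rightarrow$ (1) is immediate from the pivotal observation, since compact sets are bounded and a compact set contained in a finite-dimensional Hausdorff subspace has finite topological dimension. For (1) $\Rightarrow$ (2) I argue by contrapositive. Given a bounded infinite linearly independent sequence $\{v_n\}_{n\in\IN}$, every balanced neighbourhood $U$ of $0$ absorbs $\{v_n\}$, so $v_n/n\in U$ eventually and hence $v_n/n\to 0$. The set $K:=\{v_n/n:n\in\IN\}\cup\{0\}$ is therefore compact, yet its linear span is infinite-dimensional, so $K$ cannot have finite topological dimension.

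For (2) $\Rightarrow$ (3) I would fix a Hamel basis $X$ of $E$ and consider the free lcs $L(X)$ over the discrete space $X$. Since $X$ is simultaneously a Hamel basis of $L(X)$ (by the universal property recalled before the theorem) and of $E$, the identity on $X$ extends to a linear bijection $T:L(X)\to E$. The map $T$ is continuous because $L(X)$ carries the finest locally convex topology, and thus bounded. For the inverse: given a bounded $B\subseteq E$, the pivotal observation places $B$ inside a finite-dimensional subspace $F\subseteq E$; the preimage $T^{-1}(F)$ is a finite-dimensional subspace of $L(X)$, and the restriction $T\restriction T^{-1}(F)$ is a linear bijection between finite-dimensional Hausdorff topological vector spaces, hence a homeomorphism, so $T^{-1}(B)$ is bounded. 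The converse (3) $\Rightarrow$ (2) reduces to the standard fact that bounded subsets of a free lcs are finite-dimensional: otherwise a linear functional $\phi$ taking unbounded values on a linearly independent sequence inside the bounded set would be continuous in the finest lcs topology, a contradiction. A bornological isomorphism transports this property from $L(\kappa)$ to $E$.

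Finally, (4) $\Rightarrow$ (3) is trivial, and (3) $\Rightarrow$ (4) for bornological $E$ follows because the bornological isomorphism $T:L(\kappa)\to E$ is already continuous (its domain is free) and $T^{-1}$ is a bounded linear map out of the bornological space $E$, hence continuous; so $T$ is a topological isomorphism and $E$ is free. The most delicate step is the verification in (2) $\Rightarrow$ (3) that $T^{-1}$ is bounded, which combines the pivotal observation, the automatic continuity of linear maps out of the finest lcs topology, and the uniqueness of the Hausdorff tvs structure on a finite-dimensional space; the remaining steps are routine applications of universal properties of free or bornological lcs.
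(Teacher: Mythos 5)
Your cycle and the steps (2)$\Rightarrow$(1), (2)$\Rightarrow$(3), (3)$\Rightarrow$(2) and (3)$\Leftrightarrow$(4) are correct and essentially coincide with the paper's argument (the paper realizes the free lcs in (2)$\Rightarrow$(3) as $(E,\tau)$ with $\tau$ the finest locally convex topology rather than as $L(X)$ over a Hamel basis, but that is only cosmetic). The genuine gap is in (1)$\Rightarrow$(2). The set $K=\{v_n/n:n\in\IN\}\cup\{0\}$ is a convergent sequence together with its limit, hence a countable compact Hausdorff space homeomorphic to $\w+1$; its topological dimension is $0$, not infinite. Condition (1) speaks of the \emph{topological} (covering) dimension of the compact set, not of the algebraic dimension of its linear span, and you have conflated the two. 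So your $K$ does not contradict (1), and the contrapositive is not established.

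To repair this you must produce a compact set of genuinely infinite covering dimension, i.e.\ one containing cells of arbitrarily large finite dimension. This is exactly what the paper does: from a bounded linearly independent sequence $\{x_n\}_{n\in\w}$ (so that $2^{-n}x_n\to 0$) it forms
$$K=\bigcup_{n\in\w}\Big\{\sum_{k=n}^{2n}t_kx_k:(t_k)_{k=n}^{2n}\in\prod_{k=n}^{2n}[0,2^{-k}]\Big\},$$
a union of affine copies of the cubes $[0,1]^{n+1}$; since the $k$-th coefficient ranges over $[0,2^{-k}]$ and $\{x_k\}$ is bounded, the $n$-th block is eventually contained in any prescribed convex balanced neighborhood of zero, so $K$ is compact, and it contains an $(n+1)$-dimensional cube for every $n$, whence its topological dimension is infinite. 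With this substitution your argument is complete.
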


\begin{proof} $(1)\Ra(2)$ Suppose that each compact subset of $E$ has finite topological dimension. Assuming that $E$ contains an infinite bounded linearly indendent set, we can find a bounded linearly independent set $\{x_n\}_{n\in\w}$ consisting of pairwise distinct points $x_n$. Then the sequence $(2^{-n}x_n)_{n\in\w}$ converges to zero and $$K=\textstyle\bigcup_{n\in\w}\big\{\sum\limits_{k=n}^{2n}t_kx_k:(t_k)_{k=n}^{2n}\in\prod\limits_{k=n}^{2n}[0,2^{-k}]\big\}$$is an infinite-dimensional compact set in $E$, which contradicts our assumption.
\smallskip

$(2)\Ra(3)$ Let $\tau$ be the finest locally convex topology on  $E$. Then the identity map $(E,\tau)\to E$ is continuous and hence bounded. If each bounded linearly independent set in $E$ is finite, then each bounded set $B\subseteq E$ is contained in a finite-dimensional subspace of $E$ and hence is bounded in the topology $\tau$. This means that the identity map $E\to(E,\tau)$ is bounded and hence $E$ is bornologically isomorphic to the free lcs $(E,\tau)$.
\smallskip

$(3)\Ra(1)$ If $E$ is bornologically isomorphic to a free lcs $F$ then each bounded linearly independent set in $E$ is finite, since the free lcs $F$ has this property.
\smallskip

The implication $(4)\Ra(3)$ is trivial. If $E$ is bornological then the implication $(3)\Ra(4)$ follows from the continuity of bounded linear operators on bornological spaces.
\end{proof}

The free  lcs over discrete topological spaces are not unique lcs possessing no infinite-dimensional compact sets.
A subset $B$ of a topological space $X$ is called {\em functionally bounded} if for any continuous real-valued function $f:X\to \IR$ the set $f(B)$ is bounded.

\begin{proposition} For a Tychonoff space $X$ the following conditions are equivalent:
\begin{enumerate}
\item each compact subset of the free lcs $L(X)$ has finite topological dimension;
\item each bounded linearly independent set in $L(X)$ is finite;
\item each functionally bounded subset of $X$ is finite.
\end{enumerate}
\end{proposition}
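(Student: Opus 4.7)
The equivalence $(1)\Leftrightarrow(2)$ follows by applying Theorem~\ref{t1} to the lcs $E=L(X)$, since conditions (1) and (2) of that theorem restate conditions (1) and (2) of this proposition.

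For $(2)\Ra(3)$ I argue contrapositively. Given an infinite functionally bounded set $B_0\subseteq X$, choose pairwise distinct $x_n\in B_0$. Since $\delta(X)$ is a Hamel basis of $L(X)$, the family $\{\delta(x_n):n\in\w\}$ is linearly independent. For every continuous seminorm $p$ on $L(X)$, the composition $p\circ\delta:X\to\IR$ is a continuous real-valued function on $X$, hence bounded on the functionally bounded set $B_0$. Thus $\{\delta(x_n):n\in\w\}$ is an infinite bounded linearly independent subset of $L(X)$, contradicting (2).

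For $(3)\Ra(2)$ the crux is the following \emph{key claim}: for every bounded $B\subseteq L(X)$, the support union $S(B):=\bigcup_{v\in B}\supp(v)$ is functionally bounded in $X$. Granted this, hypothesis (3) forces $S(B)$ to be finite, whence $B\subseteq\mathrm{span}(S(B))$ lies in a finite-dimensional subspace and cannot contain an infinite linearly independent subset, proving (2). To establish the key claim I argue by contradiction: if $f\in C(X,\IR)$ is unbounded on $S(B)$, pick $v_n\in B$ and $x_n\in\supp(v_n)$ with $|f(x_n)|\to\infty$. Continuity of $f$ precludes convergent subsequences of $\{x_n\}$, so after extraction $\{x_n\}$ is closed and discrete in $X$. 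Using the Tychonoff property I build continuous bumps $g_n:X\to[0,1]$ with $g_n(x_n)=1$ and $g_n\equiv 0$ on the closed set $\{x_m:m\neq n\}\cup(\supp(v_n)\setminus\{x_n\})$, then (after a further shrinking to a locally finite family of supports) assemble them into a continuous $g:X\to\IR$. Combined with the continuous multiplication operator $\mu_f:L(X)\to L(X)$, $\mu_f(\delta(x))=f(x)\delta(x)$---which exists by the universal property of $L(X)$---this yields a continuous linear functional on $L(X)$ whose values along $v_n$ diverge, contradicting the boundedness of $B$.

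The principal technical obstacle is the assembly of the bumps $g_n$ into a globally continuous $g$: in a general Tychonoff space a closed discrete set need not admit a pairwise disjoint family of neighborhoods, and the coefficients $c_{x_n}(v_n)$ may be too small for a naive test function to detect divergence. Routing through $\mu_f$ (which pre-multiplies by $f$ before testing) and shrinking bump supports to achieve local finiteness handle these issues; alternatively, one can invoke the standard structural fact (see \cite{BL} and references therein) that every bounded subset of $L(X)$ is contained in $n\cdot\mathrm{abco}(\delta(K))$ for some $n\in\IN$ and functionally bounded $K\subseteq X$, which immediately yields $S(B)\subseteq K$ and bypasses the ad hoc construction.
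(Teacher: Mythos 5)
Your handling of $(1)\Leftrightarrow(2)$ (apply Theorem~\ref{t1} to $E=L(X)$) and of $(2)\Rightarrow(3)$ (a functionally bounded $B_0\subseteq X$ has bounded, linearly independent image $\delta(B_0)$ in $L(X)$) coincides with the paper's proof, which disposes of these two steps in exactly the same way. The divergence is in the last implication: the paper proves $(3)\Rightarrow(1)$ by citing \cite[Lemma 10.11.3]{Ban-Fan}, whereas you prove $(3)\Rightarrow(2)$ via the ``key claim'' that the support of a bounded subset of $L(X)$ is functionally bounded. That claim is true and is essentially the same structural fact the paper outsources; your fallback of quoting it from the literature (bounded sets of $L(X)$ lie in $n\cdot\mathrm{abco}(\delta(K))$ for $K$ functionally bounded) is legitimate and puts you on the same footing as the paper.

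Your attempted self-contained proof of the key claim, however, has a genuine gap that the proposed fixes do not close. The local-finiteness worry is actually the harmless one: since $|f(x_n)|\to\infty$, the preimages under $|f|$ of pairwise disjoint intervals around the values $|f(x_{n_k})|$ form a locally finite (indeed discrete) family of open sets separating the $x_{n_k}$, so the bumps $g_k$ do assemble into a continuous $g$. The real problem is quantitative: any continuous linear functional on $L(X)$ evaluated at $v_n=\sum_{x\in\supp(v_n)}c_x(v_n)\delta(x)$ sees the \emph{whole} support with its coefficients, and the distinguished coefficient $c_{x_n}(v_n)$ may tend to $0$ faster than $f(x_n)$ tends to $\infty$; pre-multiplying by $f$ via $\mu_f$ only contributes the factor $f(x_n)$, so the resulting values $c_{x_n}(v_n)f(x_n)g(x_n)$ need not diverge, and the contributions of the remaining points of $\supp(v_n)$ (on which $g$ need not vanish, since $g$ was built to kill only $\supp(v_{n})\setminus\{x_{n}\}$ for the matching index) are uncontrolled. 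To detect unboundedness one must choose the test function \emph{after} inspecting the coefficients, e.g.\ with $h(x_{n_k})$ of the order of $k/c_{x_{n_k}}(v_{n_k})$ while simultaneously suppressing the rest of each support --- and this interleaving is exactly the nontrivial content of the cited structural lemma. So either carry out that more careful construction or simply cite the known description of bounded sets in $L(X)$, as you suggest and as the paper itself does.
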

\begin{proof} The equivalence $(1)\Leftrightarrow(2)$ follows from the corresponding equivalence in Theorem~\ref{t1}.
The implication $(3)  \Rightarrow (1)$ follows from  \cite[Lemma 10.11.3]{Ban-Fan}, and $(2)   \Rightarrow (3)$ follows from the observation that each functionally bounded set in a lcs is bounded.
\end{proof}

\section{Bornological and topological characterizations of the spaces $L(\kappa)$}

In this section, given an infinite cardinal $\kappa$ we characterize the free lcs $L(\kappa)$ using some specific properties of the bornology and the topology of the space $L(\kappa)$.

Let $\kappa,\lambda$ be two cardinals. A lcs $E$ is defined to have {\em $(\kappa,\lambda)$-tall bornology} if every subset $A\subseteq E$ of cardinality $|A|=\kappa$ contains a bounded subset $B\subseteq A$ of cardinality $|A|=\lambda$.
\begin{theorem}\label{t8} Let $\kappa$ be an infinite cardinal.
For a lcs $E$ the following conditions are equivalent:
\begin{enumerate}
\item $E$ is bornologically isomorphic to the free lcs $L(\kappa)$;
\item each bounded linearly independent set in $E$ is finite and the bornology of $E$ is $(\kappa^+,\w)$-tall but not $(\kappa,\w)$-tall.
\end{enumerate}
If $E$ is bornological, then the conditions \textup{(1)--(2)} are equivalent to
\begin{itemize}
\item[(3)] $E$ is topologically isomorphic  to $L(\kappa)$.
\end{itemize}
\end{theorem}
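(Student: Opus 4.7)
The plan is as follows. For $(1)\Rightarrow(2)$ we verify both properties directly in $L(\kappa)$. For $(2)\Rightarrow(1)$ we invoke Theorem~\ref{t1} to reduce to a bornological isomorphism $E\cong L(\lambda)$ for some cardinal $\lambda=\dim E$, and then use the two tallness conditions to pin down $\lambda=\kappa$. The equivalence of $(3)$ under the bornological hypothesis will then drop out of Theorem~\ref{t1}.

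For $(1)\Rightarrow(2)$: finiteness of bounded linearly independent sets in $L(\kappa)$ is the equivalence $(1)\Leftrightarrow(2)$ of Theorem~\ref{t1}. The canonical Hamel basis $\{e_\alpha:\alpha<\kappa\}$ has cardinality $\kappa$ and, being linearly independent, contains no infinite bounded subset by the same theorem; hence $L(\kappa)$ is not $(\kappa,\w)$-tall. To show $L(\kappa)$ is $(\kappa^+,\w)$-tall, take any $A\subseteq L(\kappa)$ with $|A|=\kappa^+$ and partition it into support classes $A_F=\{a\in A:\supp(a)=F\}$ indexed by $F\in[\kappa]^{<\w}$. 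Since $|[\kappa]^{<\w}|=\kappa<\kappa^+$, a pigeonhole argument forces $|A_F|>\kappa\geq\w$ for some $F$. The set $A_F$ lies inside the finite-dimensional subspace spanned by $\{e_\alpha:\alpha\in F\}$, which carries its unique Hausdorff linear topology and is linearly homeomorphic to $\IR^{|F|}$. By $\sigma$-compactness of Euclidean space, some compact (hence bounded) subset contains uncountably many points of $A_F$, and any countably infinite selection gives the desired bounded subset of $A$.

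For $(2)\Rightarrow(1)$: assuming the two tallness conditions together with finiteness of bounded linearly independent sets, the implication $(2)\Rightarrow(3)$ of Theorem~\ref{t1} yields a bornological isomorphism $E\cong L(\lambda)$ with $\lambda=\dim E$. Since $(\mu,\nu)$-tallness is manifestly preserved by bornological isomorphism, $L(\lambda)$ is $(\kappa^+,\w)$-tall but not $(\kappa,\w)$-tall. The pigeonhole argument of the previous paragraph, applied to $L(\lambda)$, shows that $L(\lambda)$ is $(\mu,\w)$-tall whenever $\mu>\lambda$, while picking $\mu$ basis vectors shows it is not $(\mu,\w)$-tall when $\mu\leq\lambda$. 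Applying these to $\mu=\kappa^+$ and $\mu=\kappa$ yields $\lambda\leq\kappa$ and $\lambda\geq\kappa$ respectively, so $\lambda=\kappa$.

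Finally, when $E$ is bornological, $(3)\Rightarrow(1)$ is trivial, and $(1)\Rightarrow(3)$ follows from the implication $(3)\Rightarrow(4)$ of Theorem~\ref{t1}: a bornological lcs bornologically isomorphic to the free lcs $L(\kappa)$ is itself free, hence topologically isomorphic to $L(\dim E)=L(\kappa)$. The only non-routine step in the whole argument is the pigeonhole verification that $L(\kappa)$ is $(\kappa^+,\w)$-tall, which rests on $|[\kappa]^{<\w}|=\kappa$ together with $\sigma$-compactness of finite-dimensional Euclidean spaces; the rest is bookkeeping with Theorem~\ref{t1} and the invariance of tallness.
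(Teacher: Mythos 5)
Your proposal is correct and follows essentially the same route as the paper's proof: the same pigeonhole argument over a cover of the space by $\kappa$ many compact pieces (the $\sigma$-compact finite-dimensional spans of finite subsets of a Hamel basis) gives $(\kappa^+,\w)$-tallness, the Hamel basis itself witnesses the failure of $(\kappa,\w)$-tallness, and Theorem~\ref{t1} supplies the bornological isomorphism whose dimension is then pinned down to $\kappa$ by the two tallness conditions, exactly as in the paper (which merely reverses the order, fixing $|B|=\kappa$ before constructing the isomorphism). One shared caveat: your step ``$L(\lambda)$ is $(\mu,\w)$-tall whenever $\mu>\lambda$'' and the paper's corresponding claim ``$|B|<\kappa$ implies $E$ is a union of $<\kappa$ bounded sets'' both fail when $\lambda=|B|$ is finite and $\kappa=\w$ (e.g.\ $\IR$ is $(\w_1,\w)$-tall but not $(\w,\w)$-tall, so it satisfies condition (2) with $\kappa=\w$), so both arguments implicitly assume $E$ is infinite-dimensional.
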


\begin{proof} $(1)  \Rightarrow(2)$: Assume that $E$ is bornologically isomorphic to $L(\kappa)$. Then $E$ has algebraic dimension $\kappa$ and each bounded linearly independent set in $E$ is finite (since this is true in $L(\kappa)$).

To see that the bornology of $E$ is $(\kappa^+,\w)$-tall, take any set $K\subseteq E$ of cardinality $|K|=\kappa^+$. Since  $E$ has algebraic dimension $\kappa$, there exists a cover $(B_\alpha)_{\alpha\in\kappa}$ of $E$ by $\kappa$ many compact sets. By the Pigeonhole Principle, there exists $\alpha\in\kappa$ such that $|K\cap B_\alpha|=\kappa^+$. This means that the bornology of $E$ is $(\kappa^+,\kappa^+)$-tall and hence $(\kappa^+,\w)$-tall.

To see that the bornology of the space $E$ is not $(\kappa,\w)$-tall, observe that the  Hamel basis $\kappa$ of $L(\kappa)$ has the property that no infinite subset of $\kappa$ is bounded in $L(\kappa)$. Since $E$ is bornologically  isomorphic to $L(\kappa)$, the image of $\kappa$ in $E$ is a subset of cardinality $\kappa$ containing no bounded infinite subsets and witnessing that $E$ is not $(\kappa,\w)$-tall.

$(2) \Rightarrow (1)$:  Assume that each bounded linearly independent set in $E$ is finite and the bornology of $E$   is $(\kappa^+,\w)$-tall but not $(\kappa,\w)$-tall. Let $B$ be a Hamel basis of $E$. We claim that $|B|=\kappa$. Assuming that $|B|>\kappa$, we conclude that $E$ is not $(\kappa^+,\w)$-tall, which is a contradiction. Assuming that $|B|<\kappa$, we conclude that $E$ is the union of $<\kappa$ many bounded sets and hence is $(\kappa,\kappa)$-tall by the Pigeonhole Principle. But this contradicts our assumption. Therefore $|B|=\kappa$. Let $h:\kappa\to B$ be any  bijection and $\bar h:L(\kappa)\to E$ be the unique extension of $h$ to a linear continuous operator. Since $B$ is a Hamel basis for $E$, the operator $\bar h$ is bijective. Since each bounded set in $E$ is contained in a finite-dimensional linear subspace, the operator $\bar h^{-1}:E\to L(\kappa)$ is bounded and hence $\bar h:L(\kappa)\to E$ is a bornological  isomorphism.

If the space $E$ is bornological, then the equivalence $(1)\Leftrightarrow(3)$ follows from the bornological property of $E$ and $L(\kappa)$.
\end{proof}

The $(\kappa,\w)$-tallness of the bornology of a lcs $E$  has topological counterparts introduced in the following definition.
\begin{definition} Let $\kappa,\lambda$ be cardinals. We say that a topological space $X$ is
\begin{itemize}
\item {\em $(\kappa,\lambda)_p$-equiconvergent at a point} $x\in X$ if for any  indexed family $\{x_\alpha\}_{\alpha\in\kappa}\subseteq \{s\in X^\w:\lim_{n\to\infty}s(n)=x\}$, there exists a subset $\Lambda\subseteq\kappa$ of cardinality $|\Lambda|=\lambda$ such that for every neighborhood $O_x\subseteq X$ of $x$ there exists $n\in\w$ such that the set $\{\alpha\in\Lambda:x_\alpha(n)\notin O_x\}$ is finite;
\item {\em $(\kappa,\lambda)_k$-equiconvergent at a point} $x\in X$ if for any  indexed family $\{x_\alpha\}_{\alpha\in\kappa}\subseteq \{s\in X^\w:\lim_{n\to\infty}s(n)=x\}$, there exists a subset $\Lambda\subseteq\kappa$ of cardinality $|\Lambda|=\lambda$ such that for every neighborhood $O_x\subseteq X$ of $x$ there exists $n\in\w$ such that for every $m\ge n$ and $\alpha\in\Lambda$ we have  $x_\alpha(m)\in O_x$;
\item {\em $(\kappa,\lambda)_p$-equiconvergent} if $X$ is $(\kappa,\lambda)_p$-equiconvergent at every point $x\in X$;
\item {\em $(\kappa,\lambda)_k$-equiconvergent} if $X$ is $(\kappa,\lambda)_k$-equiconvergent at every point $x\in X$.
\end{itemize}
\end{definition}
It is easy to see that every $(\kappa,\lambda)_k$-equiconvergent space is  $(\kappa,\lambda)_p$-equiconvergent.
The following observation will be used  below.

\begin{proposition}\label{p9} If a lcs $E$ is $(\kappa,\lambda)_p$-equiconvergent, then its bornology is $(\kappa,\lambda)$-tall.
\end{proposition}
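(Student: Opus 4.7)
The plan is to take an arbitrary subset $A\subseteq E$ of cardinality $\kappa$, turn each point $a\in A$ into a null sequence in a uniform way, and then harvest a $\lambda$-sized bounded subset from the conclusion of $(\kappa,\lambda)_p$-equiconvergence at $0$. The key observation making this work in any lcs (in fact any tvs) is that scalar multiplication is continuous, so for every $a\in E$ one has $\tfrac{1}{n+1}a\to 0$ as $n\to\infty$.

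More precisely, I would first enumerate $A=\{a_\alpha:\alpha\in\kappa\}$ bijectively and, for each $\alpha\in\kappa$, define $x_\alpha\in E^\w$ by $x_\alpha(n)=\tfrac{1}{n+1}a_\alpha$. Then $\lim_{n\to\infty}x_\alpha(n)=0$ in $E$. Applying $(\kappa,\lambda)_p$-equiconvergence at the point $0\in E$ to the indexed family $\{x_\alpha\}_{\alpha\in\kappa}$, I obtain a subset $\Lambda\subseteq\kappa$ of cardinality $\lambda$ with the property that for every zero-neighborhood $O\subseteq E$ there exists $n_O\in\w$ such that the set $F_O:=\{\alpha\in\Lambda:\tfrac{1}{n_O+1}a_\alpha\notin O\}$ is finite.

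The candidate bounded set is $B:=\{a_\alpha:\alpha\in\Lambda\}$, which has cardinality $\lambda$ because the enumeration of $A$ was a bijection. To check boundedness, fix an arbitrary zero-neighborhood $U$ in $E$ and choose a balanced zero-neighborhood $V$ with $V\subseteq U$ (this exists in every tvs). Apply the previous paragraph with $O=V$ to get the corresponding $n=n_V$ and the finite exceptional set $F=F_V\subseteq\Lambda$. For every $\alpha\in\Lambda\setminus F$ one has $a_\alpha\in (n+1)V\subseteq (n+1)U$. The finite set $\{a_\alpha:\alpha\in F\}$ is bounded, so there exists $m\in\IN$ with $\{a_\alpha:\alpha\in F\}\subseteq mU$. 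Taking $N:=\max(n+1,m)$ and using the balancedness of $V$ (or replacing $U$ by a balanced subneighborhood from the start), I conclude $B\subseteq NU$, proving $B$ is bounded.

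The proof is essentially routine once one has the idea of scaling by $\tfrac{1}{n+1}$; the only small point requiring care is the handling of the finite exceptional set $F$, which is absorbed into a single scalar multiple by using the (standard) fact that any finite set in a tvs is bounded. I do not foresee a real obstacle: the argument uses only continuity of scalar multiplication and the definition of $(\kappa,\lambda)_p$-equiconvergence at $0$.
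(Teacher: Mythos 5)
Your proof is correct and follows essentially the same route as the paper: both turn each point $a$ of the given $\kappa$-sized set into the null sequence $n\mapsto c_n a$ (the paper uses $c_n=2^{-n}$, you use $1/(n+1)$), apply $(\kappa,\lambda)_p$-equiconvergence at $0$, and then absorb the finite exceptional set using a balanced subneighborhood. The differences are purely cosmetic.
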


\begin{proof} Given a subset $K\subseteq E$ of cardinality $|K|=\kappa$, for every $\alpha\in K$ consider the convergent sequence $x_\alpha\in X^\w$ defined by $x_\alpha(n)=2^{-n}\alpha$. Assuming that the lcs $E$ is  $(\kappa,\lambda)_p$-equiconvergent, we can find a subset $L\subseteq K$  of cardinality $|L|=\lambda$ such that for every neighborhood of zero $U\subseteq E$ there exists $n\in\w$ such that the set $\{\alpha\in L:2^{-n}\alpha\notin U\}$ is finite. We claim that the set $L$ is bounded. Indeed, for every neighborhood $U\subseteq E$ of zero, we find a neighborhood $V\subseteq E$ of zero such that $[0,1]\cdot V\subseteq U$. By our assumption, there exists $n\in\w$ such that the set $F=\{\alpha\in K:2^{-n}\alpha\notin V\}$ is finite. Find $m\ge n$ such that $2^{-m}\alpha\in U$ for every $\alpha\in F$.  Then  $2^{-m}L\subseteq 2^{-m}(L\setminus F)\cup 2^{-m}F\subseteq ([0,1]\cdot V)\cup U=U$, and hence the set $L$ is bounded.
\end{proof}
Nevertheless, it seems that the following question  remains open.
\begin{problem} Assume that the bornology of a lcs $E$ is $(\w_1,\w)$-tall. Is it true that  $E$ is $(\w_1,\w)_p$-equiconvergent?
\end{problem}
Below we prove the following topological counterpart to  Theorem \ref{t8}.

\begin{theorem} \label{tlast} Let $\kappa$ be an infinite cardinal.
 For a lcs $E$ the following conditions are equivalent:
\begin{enumerate}
\item $E$ is bornologically isomorphic to $L(\kappa)$;
\item each compact subset of $E$ has finite topological dimension, $E$ is $(\kappa^+,\w)_k$-equiconvergent but not $(\kappa,\w)_p$-equiconvergent.
\item each compact subset of $E$ has finite topological dimension, $E$ is $(\kappa^+,\w)_p$-equiconvergent but not $(\kappa,\w)_k$-equiconvergent.
\end{enumerate}
If $E$ is bornological, then the conditions \textup{(1)--(3)} are equivalent to
\begin{itemize}
\item[(4)] $E$ is topologically isomorphic to $L(\kappa)$.
\end{itemize}
\end{theorem}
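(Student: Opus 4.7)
The plan is to reduce the topological conditions (2) and (3) to the bornological characterization of Theorem~\ref{t8} via Proposition~\ref{p9}, handling the positive equiconvergence through a pigeonhole-plus-diagonal argument in finite-dimensional subspaces. The equivalence $(1)\Leftrightarrow(4)$ for bornological $E$ is immediate: $L(\kappa)$ is bornological (the finest locally convex topology on any vector space is bornological), so a bornological isomorphism between the bornological spaces $E$ and $L(\kappa)$ is automatically a topological isomorphism, since bounded linear maps out of a bornological lcs are continuous.

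For $(1)\Rightarrow(2),(3)$, Theorem~\ref{t1} gives the finite topological dimension of compact subsets of $E$, and since the bornology of $E$ coincides with that of $L(\kappa)$ --- which by Theorem~\ref{t8} is not $(\kappa,\w)$-tall --- the contrapositive of Proposition~\ref{p9} yields ``$E$ is not $(\kappa,\w)_p$-equiconvergent'', and a fortiori not $(\kappa,\w)_k$-equiconvergent. For the positive $(\kappa^+,\w)_k$-equiconvergence, let $\{x_\alpha\}_{\alpha\in\kappa^+}$ be a family of sequences in $E$ converging to $0$. Each $x_\alpha$ is bounded and hence, via the bornological identification with $L(\kappa)$, is supported on some finite set $F_\alpha\subseteq\kappa$ of Hamel-basis indices. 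Since $|[\kappa]^{<\w}|=\kappa<\kappa^+$, pigeonhole produces a subfamily of size $\kappa^+$ with a common support $F$; all these sequences live in the finite-dimensional subspace $V:=\mathrm{span}(F)$, whose induced topology from $E$ is the unique Hausdorff locally convex (i.e., Euclidean) topology on $V$. In this metrizable setting, a standard diagonal argument (using the uncountability of the indexing set to iteratively pigeonhole on the rates of convergence $N_k(\alpha):=\min\{N\in\w:x_\alpha(m)\in B(0,1/k)\text{ for all }m\geq N\}$) extracts a countably infinite subfamily $\Lambda$ equiconvergent in $V$; the equiconvergence lifts to $E$ because $U\cap V$ is a neighborhood of $0$ in $V$ whenever $U$ is a neighborhood of $0$ in $E$.

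For $(2)\Rightarrow(1)$ (and $(3)\Rightarrow(1)$ identically), Theorem~\ref{t1} produces $\kappa'=\dim E$ with $E$ bornologically isomorphic to $L(\kappa')$, and the task reduces to showing $\kappa'=\kappa$. Since $(\kappa^+,\w)_k$-equiconvergence implies $(\kappa^+,\w)_p$-equiconvergence, Proposition~\ref{p9} gives $(\kappa^+,\w)$-tall bornology for $L(\kappa')$; the Hamel-basis argument from Theorem~\ref{t8} then forces $\kappa'\leq\kappa$. For the reverse inequality $\kappa'\geq\kappa$, I argue by contradiction: if $\kappa'<\kappa$, I apply the same pigeonhole-plus-diagonal argument in $L(\kappa')$ with its finest topology (where $\kappa$-many convergent sequences share a common finite support, since $|[\kappa']^{<\w}|=\kappa'<\kappa$) to conclude that $L(\kappa')$ is $(\kappa,\w)_k$-equiconvergent. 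Since $E$'s topology is coarser than the finest lcs topology on $L(\kappa')$ --- and both $_p$- and $_k$-equiconvergence are preserved under coarsening, as every $E$-neighborhood of $0$ is a neighborhood of $0$ in the finer topology --- $E$ too would be $(\kappa,\w)_k$- and hence $(\kappa,\w)_p$-equiconvergent, contradicting the negative clause of (2) or (3).

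The main obstacle I anticipate is the careful bookkeeping between two cardinality regimes: pigeonholing onto a common finite support uses $|[\kappa]^{<\w}|=\kappa$, while the diagonal extraction inside a finite-dimensional metric space requires the indexing set to be uncountable. The subtle step is the transfer of equiconvergence properties between $E$ and $L(\kappa')$ in the right direction, which hinges on the observation that they share a bornology while $L(\kappa')$ carries the strictly finer (finest) locally convex topology, so coarsening preserves both variants of equiconvergence.
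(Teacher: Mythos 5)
Your proposal is correct and follows the same overall strategy as the paper's proof: the negative clauses and the cardinality bookkeeping are reduced to Theorems~\ref{t1} and~\ref{t8} via Proposition~\ref{p9}, and the positive $(\kappa^+,\w)_k$-equiconvergence is obtained by pigeonholing the $\kappa^+$ null sequences onto a common finite-dimensional subspace $[F]$. The one place you genuinely diverge is the extraction of the countable equiconvergent subfamily: the paper regards the sequences as points $\bar x_\alpha$ of the Polish space $C_k(\w+1,[F])$ and uses the fact that an uncountable subset of a separable metrizable space contains a nontrivial convergent sequence, whereas you iterate the pigeonhole on the convergence rates $N_k(\alpha)$, producing a decreasing chain of uncountable sets on which each $N_k$ is constant and then a diagonal choice $\alpha_k\in A_k$; both arguments work, and yours is more elementary and self-contained. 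One caveat: your blanket claim that $(\kappa,\w)_k$- and $(\kappa,\w)_p$-equiconvergence are ``preserved under coarsening'' is false in general, since a coarser topology admits more null sequences, so the hypothesis of the definition quantifies over a larger collection of families. It is valid in your situation only because $E$ and $L(\kappa')$ share their bounded sets, every null sequence lies in a finite-dimensional subspace, and the two topologies agree there, so the null sequences of $E$ and $L(\kappa')$ coincide; you should say this explicitly, or simpler, avoid the transfer altogether by running the pigeonhole-plus-diagonal argument directly in $E$ (which is in effect what the paper does by applying the already proved implication $(1)\Rightarrow(2)$ to $E\cong_{\mathrm{born}}L(|B|)$).
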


\begin{proof} $(1)   \Rightarrow (2)$:  Assume that $E$ is bornologically isomorphic to $L(\kappa)$. By Theorems~\ref{t8} each bounded linearly independent set in $E$ is finite, and by Theorem~\ref{t1}, each compact subset of $E$  is finite-dimensional.  The linear space $E$ has algebraic dimension $\kappa$, being isomorphic to the linear space $L(\kappa)$. Let $B$ be a Hamel basis for the space $E$.

 To show that $E$ is $(\kappa^+,\w)_k$-equiconvergent, fix an indexed family $\{x_\alpha\}_{\alpha\in\kappa^+}\subseteq\{s\in E^\w:\lim_{n\to\infty}s(n)=0\}.$ Since bounded linearly independent sets in $E$ are finite, for every $\alpha\in\kappa^+$ there exists a finite set $F_\alpha\subseteq B$ such that the bounded set $x_\alpha[\w]$ is contained in the linear hull of $F_\alpha$. Since $|B|=\kappa<\kappa^+$, by the Pigeonhole Principle, for some finite set $F\subseteq B$ the set $A=\{\alpha\in\kappa^+:F_\alpha=F\}$ is uncountable. Let $[F]$ be the linear hull of the finite set $F$ in the linear space $E$.

Consider the ordinal $\w+1=\w\cup\{\w\}$ endowed with the compact metrizable topology generated by the linear order.  For every $\alpha\in A$ let $\bar x_\alpha:\w+1\to [F]$ be the continuous function such that $\bar x_\alpha{\restriction}\w=x_\alpha$ and $\bar x_\alpha(\w)=0$. Let $C_k(\w+1,[F])$ be the space of continuous functions from $\w+1$ to $[F]$, endowed with the compact-open topology. Since $A$ is uncountable and the space $C_k(\w+1,[F])\supseteq\{\bar x_\alpha\}_{\alpha\in A}$ is Polish, there exists a sequence $\{\alpha_n\}_{n\in\w}\subseteq A$ of pairwise distinct ordinals such that the sequence $(\bar x_{\alpha_n})_{n\in\w}$ converges to $\bar x_{\alpha_0}$ in the function space $C_k(\w+1,[F])$. Then the set $\Lambda=\{\alpha_n\}_{n\in\w}\subseteq\kappa^+$ witnesses that $E$ is $(\kappa^+,\w)_k$-equiconvergent to zero and by the topological homogeneity, $E$ is $(\kappa^+,\w)$-equiconvergent.
By Theorem~\ref{t8}, the bornology of the space $E$ is not $(\kappa,\w)$-tall. By Proposition~\ref{p9}, the space $E$ is not $(\kappa,\w)_p$-equiconvergent.

The implication $(2)   \Rightarrow (3)$ is trivial.
To prove that $(3) \Rightarrow  (1)$, assume that each compact subset of $E$ has finite topological dimension and $E$ is $(\kappa^+,\w)_p$-equiconvergent but not $(\kappa,\w)_k$-equiconvergent. Let $B$  be a Hamel basis in $E$. By Theorem~\ref{t1}, the space $E$ is bornologically isomorphic to $L(|B|)$. Applying the (already proved) implication $(1)   \Rightarrow (2)$, we conclude that $E$ is $(|B|^+,\w)_k$-equiconvergent, which implies that $|B|\ge\kappa$ (as $E$ is not $(\kappa,\w)_k$-equiconvergent). Assuming that $|B|>\kappa$, we can see that the family $\{x_b\}_{b\in B}\subseteq E^\w$ of the sequences $x_b(n)=2^{-n}b$ witnesses that $E$ is not $(|B|,\w)_p$-equiconvergent and hence not $(\kappa^+,\w)_p$-equiconvergent, which contradicts our assumption. So, $|B|=\kappa$ and $E$ is bornologically isomorphic to $L(\kappa)$.
If the space $E$ is bornological, then the equivalence $(1)\Leftrightarrow(4)$ follows from the bornological property of $E$ and $L(\kappa)$.
 \end{proof}

Observe that the purely topological properties (2), (3) in Theorem~\ref{tlast} characterize the free lcs $L(\kappa)$ up to bornological equivalence. We do not know whether the topological structure of the space $L(\kappa)$ determines this lcs uniquely up to a topological isomorphism.

\begin{problem} Assume that a lcs $E$ is homeomorphic to the free lcs $L(\kappa)$ for some cardinal $\kappa$. Is $E$ topologically isomorphic to $L(\kappa)$?
\end{problem}
By \cite{Ban98} the answer to this problem is affirmative for $\kappa=\w$. This affirmative answer can also be derived from the following topological characterizations of the space $L(\w)=\varphi$. This characterization has been announced in the introduction as Theorem~\ref{t:main2}.
\begin{theorem} A lcs $E$ is topologically isomorphic to the free lcs $L(\w)$ if and only if $E$ is an infinite-dimensional $k_\IR$-space containing no infinite-dimensional compact subset.
\end{theorem}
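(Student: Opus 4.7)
For the $(\Rightarrow)$ direction, I would verify that $L(\w) = \varphi$ satisfies all three conditions: it is infinite-dimensional, contains no infinite-dimensional compact subsets (noted in Section~1), and is a $k_\IR$-space; indeed $\varphi$ is even a $k$-space, being the strict inductive limit of the finite-dimensional subspaces $\R^n$ (an $(LF)$-space), and every $k$-space is a $k_\IR$-space.

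For the $(\Leftarrow)$ direction, assume $E$ is an infinite-dimensional $k_\IR$-space lcs containing no infinite-dimensional compact subset. Let $\tau$ denote the finest locally convex topology on the underlying vector space of $E$; as a free lcs, $(E,\tau)$ is topologically isomorphic to $L(\kappa)$ where $\kappa := \dim E$. By Theorem~\ref{t1} (applied to $E$, in which every compact subset is finite-dimensional), the identity $(E,\tau) \to E$ is a bornological isomorphism, and it is continuous by the universal property of $\tau$; hence the topology of $E$ is contained in $\tau$. To prove the reverse inclusion I would use the $k_\IR$-property: every seminorm $p$ on $E$ is continuous on each finite-dimensional subspace (a standard fact), and every compact $K \subseteq E$ is contained in a finite-dimensional subspace (a consequence of Theorem~\ref{t1}) whose induced topology from $E$ is the unique Hausdorff linear topology, hence Euclidean; therefore $p|_K$ is continuous. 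The $k_\IR$-property then forces $p$ to be continuous in the topology of $E$. Since every seminorm is continuous in the topology of $E$ and $\tau$ is generated by the family of all seminorms, $\tau$ coincides with the topology of $E$, so $E$ is topologically isomorphic to $L(\kappa)$.

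The main obstacle is to prove $\kappa = \w$. I propose to show that for $\kappa \geq \w_1$, the space $L(\kappa)$ is not a $k_\IR$-space, contradicting the hypothesis. Consider $f: L(\kappa) \to \R$ defined by $f(x) := \sum_{\alpha \in \kappa} |x_\alpha|^{1/2}$, which is a finite sum since each $x \in L(\kappa)$ has finite support; its restriction to any finite-dimensional subspace is a continuous function of the finitely many coordinates involved, so $f|_K$ is continuous for every compact $K$ (which is finite-dimensional by the preceding step). To see that $f$ is discontinuous at $0$ in $\tau$, suppose for contradiction that $\{f < 1\}$ is a $\tau$-neighborhood of $0$; then it contains an absolutely convex absorbing set $V$, whose Minkowski functional $p_V$ is a seminorm with $\{p_V < 1\} \subseteq \{f < 1\}$. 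Because $\kappa = \bigcup_{n \in \N} \{\alpha \in \kappa : p_V(e_\alpha) \leq n\}$ is uncountable, the pigeonhole principle furnishes some $n \in \N$ for which the set $I := \{\alpha : p_V(e_\alpha) \leq n\}$ is infinite. Picking distinct $\alpha_1, \dots, \alpha_k \in I$ with $k > 4(n+1)$ and setting $x_k := k^{-1}\sum_{i=1}^k e_{\alpha_i}$, subadditivity of $p_V$ gives $p_V(x_k) \leq n$ while direct computation gives $f(x_k) = \sqrt{k}$. Rescaling by $t := 1/(p_V(x_k)+1)$, we have $p_V(tx_k) = p_V(x_k)/(p_V(x_k)+1) < 1$ whereas $f(tx_k) = \sqrt{t}\,f(x_k) = \sqrt{tk} \geq \sqrt{k/(n+1)} > 2 > 1$, contradicting $\{p_V < 1\} \subseteq \{f < 1\}$. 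Hence $\kappa < \w_1$, and infinite-dimensionality forces $\kappa = \w$, so $E$ is topologically isomorphic to $L(\w)$.
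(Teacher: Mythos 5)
Your proof is correct, and while the first half follows the paper's route, the decisive step is handled quite differently. The paper also begins by showing that the $k_\IR$-property forces $E$ to carry the finest locally convex topology (it phrases this via the canonical operator $T:L(B)\to E$ and the continuity of $T^{-1}$ on compact sets, where you argue via seminorms -- these are the same argument), but it then disposes of the cardinality question by citing Gabriyelyan's theorem that $L(X)$ is Ascoli if and only if $X$ is countable and discrete, together with the fact that $k_\IR$-spaces are Ascoli. You instead give a direct, self-contained proof that $L(\kappa)$ fails to be a $k_\IR$-space for uncountable $\kappa$, by exhibiting the explicit function $f(x)=\sum_\alpha|x_\alpha|^{1/2}$, which is continuous on every finite-dimensional subspace (hence on every compact set) but not $\tau$-continuous at $0$; your pigeonhole-plus-rescaling computation is sound, and it correctly uses uncountability (for $\kappa=\w$ the sets $\{\alpha:p_V(e_\alpha)\le n\}$ can all be finite, as they must be, since $\varphi$ really is a $k_\IR$-space). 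The paper's approach is shorter and, via the Ascoli property, connects to the stronger open question it records afterwards (whether Ascoli alone suffices in the characterization); your approach buys independence from the external reference \cite{Gab} at the cost of not recovering that stronger Ascoli statement. One small point worth making explicit in your forward direction: the identification of the finest locally convex topology on $\varphi$ with the direct-limit topology of the tower $\IR^1\subseteq\IR^2\subseteq\cdots$ is what makes $\varphi$ a $k$-space; this is classical but is the fact actually being used.
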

\begin{proof} The ``only if'' part follows from known topological properties of the space $L(\w)=\varphi$ mentioned in the introduction. To prove the ``if'' part, assume that a lcs $E$ is a $k_\IR$-space and each  compact subset of $E$ is finite-dimensional. Choose a Hamel basis $B$ in $E$ and consider the linear continuous operator $T:L(B)\to E$ such that $T(b)=b$ for each $b\in B$. Since $B$ is a Hamel basis, the operator $T$ is injective. We claim that the operator $T^{-1}:E\to L(B)$ is bounded. By Theorem \ref{t1}   the linear hull of each compact subset  $K\subseteq E$  is finite-dimensional, which implies that the restriction $T^{-1}{\restriction}K$ is continuous. Since $E$ is a $k_\IR$-space, $T^{-1}$ is continuous and hence $T$ is a topological isomorphism. Then the free lcs $L(B)$ is a $k_\IR$-space. Applying \cite{Gab}, we conclude that $B$ is countable and hence $E$ is topologically isomorphic to $L(\w)$.
\end{proof}

A Tychonoff space $X$ is called {\em Ascoli} if the canonical map $\delta:X\to C_k(C_k(X))$ assigning to each point $x\in X$ the Dirac functional $\delta_x:C_k(X)\to \IR$, $\delta_x:f\mapsto f(x)$, is continuous.  By \cite{BG}, the class of Ascoli spaces includes all Tychonoff $k_\IR$-spaces. By \cite{Gab} a Tychonoff space $X$ is countable and discrete if and only if its free lcs $L(X)$ is Ascoli.

\begin{problem} Assume that an infinite-dimensional lcs $E$ is Ascoli and contains no infinite-dimensional compact subsets. Is $E$ topologically isomorphic to the space $L(\w)$?
\end{problem}

\section{Equiconvergence of topological spaces and  proof of Theorem \ref{t:main}}\label{s:network}

In this section we establish two results related to equiconvergence in topological spaces.

\begin{theorem}\label{follows}  If a topological space $X$ admits  an $\w^\w$-base at a point $x\in X$, then $X$ is $(\w_1,\w)_k$-equiconvergent at the point $x$.
\end{theorem}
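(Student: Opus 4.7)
The plan is to translate the convergence $x_\alpha\to x$ into a function $f_\alpha\in\omega^\omega$ built from the $\omega^\omega$-base, and then extract a countable $\Lambda\subseteq\omega_1$ on which the $f_\alpha$'s are pointwise bounded via an iterated pigeonhole argument. Uniform convergence will then follow from the monotonicity of the base.

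Fix the $\omega^\omega$-base $(U_\sigma)_{\sigma\in\omega^\omega}$ at $x$ and the family $\{x_\alpha\}_{\alpha\in\omega_1}$ of sequences converging to $x$. For each $\alpha\in\omega_1$ and each $n\in\omega$, set
$$f_\alpha(n)\;:=\;\min\bigl\{m\in\omega : x_\alpha(k)\in U_{\vec n}\ \text{for all}\ k\geq m\bigr\},$$
where $\vec n=(n,n,n,\ldots)\in\omega^\omega$. Then $f_\alpha\in\omega^\omega$, since $x_\alpha\to x$ and $U_{\vec n}$ is a neighborhood of $x$. Iterate pigeonhole: put $A_0=\omega_1$ and inductively choose $c_n\in\omega$ with $A_{n+1}:=\{\alpha\in A_n:f_\alpha(n)=c_n\}$ still uncountable; pick pairwise distinct $\alpha_n\in A_{n+1}$ and let $\Lambda=\{\alpha_n:n\in\omega\}$. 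Setting $g(n):=\max\bigl(c_n,\max_{j<n}f_{\alpha_j}(n)\bigr)$, one gets $x_\alpha(k)\in U_{\vec n}$ for every $\alpha\in\Lambda$ and every $k\geq g(n)$.

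Given a neighborhood $O\ni x$, pick $\sigma\in\omega^\omega$ with $U_\sigma\subseteq O$. If $\sigma$ is bounded, say $\sigma\leq\vec n$, then monotonicity of the base yields $U_{\vec n}\subseteq U_\sigma\subseteq O$ and $N:=g(n)$ gives the required uniform threshold. The genuine obstacle is the case of an unbounded $\sigma$: since $\omega^\omega$ has no countable cofinal subset, no countable family of the form $\{U_{\vec n}\}_n$ is a neighborhood base at $x$ in general. My plan to circumvent this is to strengthen the encoding by enumerating $\omega^{<\omega}=\{s_k\}_{k\in\omega}$ and defining a doubly indexed family $f_\alpha^{s,n}:=\min\{m:x_\alpha(k)\in U_{s\frown\vec n}\ \forall k\geq m\}$, where $s\frown\vec n\in\omega^\omega$ continues $s$ by the constant tail $n$; a single iterated pigeonhole on the countable index set $\omega^{<\omega}\times\omega$ still extracts a countable $\Lambda$ uniformly controlling the neighborhoods $U_{s\frown\vec n}$ for all $(s,n)$.

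The subtlest step, which I expect to be the hardest, is closing the residual gap for $\sigma\in\omega^\omega$ of unbounded support, not dominated by any $s\frown\vec n$. Here the idea is to invoke the Cascales--Orihuela theorem, which says every compact subset of $X$ is metrizable: in particular each $T_\alpha:=\{x_\alpha(m):m\in\omega\}\cup\{x\}$ is metrizable compact, so $x$ has a countable local base inside $T_\alpha$ coming from a countable subfamily of $(U_\sigma)$. Choosing $\Lambda$ carefully---for instance by taking $\Lambda=\omega_1\cap M$ for a countable elementary submodel $M\prec H_\theta$ containing $(U_\sigma)$ and $\{x_\alpha\}_{\alpha\in\omega_1}$---should make these countable local bases align with the pigeonhole control obtained earlier, reducing the uniform control of $U_\sigma$ for arbitrary $\sigma$ to a metric-style argument inside a countable union of metrizable compacta. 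Weaving together the tree-indexed combinatorial pigeonhole and this Cascales--Orihuela metrizability is the delicate and most interesting part of the argument.
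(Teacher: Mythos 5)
Your reduction to the countable test family $\{U_{s\frown\vec n}:s\in\w^{<\w},\ n\in\w\}$ cannot be completed, and the patch you sketch for the ``residual gap'' does not work. The gap is real: no countable subset of $\w^\w$ is cofinal (given $f_k$, the function $h(n)=f_n(n)+1$ is dominated by none of them), so there are neighborhoods $U_\sigma$ about which your pigeonhole data $f_\alpha^{s,n}$ records nothing. Both proposed repairs fail. First, each $T_\alpha=\{x_\alpha(m):m\in\w\}\cup\{x\}$ is a convergent sequence together with its limit, hence a metrizable compactum for free; Cascales--Orihuela (a theorem about lcs, whereas here $X$ is an arbitrary topological space) adds no information, and the trace of the neighborhood filter of $x$ on $T_\alpha$ is essentially the cofinite filter, which carries no uniformity across $\alpha$. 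Second, $\Lambda=\w_1\cap M$ is a countable \emph{initial segment} of $\w_1$, and the theorem does not assert that a prescribed countable subfamily equiconverges: already in $\IR$ with $x=0$, the sequences $x_k(n)=1$ for $n\le k$ and $x_k(n)=1/(n-k)$ for $n>k$ (with $k\in\w$) each converge to $0$, yet no infinite subfamily of them equiconverges (for any threshold $m$ pick $k\in\Lambda$ with $k\ge m$; then $x_k(m)=1$). Any $\w_1$-family beginning with these defeats every $\Lambda$ that contains infinitely many indices below $\w$, in particular $\Lambda=\w_1\cap M$.

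The idea you are missing is to encode the rate of convergence of $x_\alpha$ against \emph{all} of the base at once: let $\mu_\alpha:\w^\w\to\w$ send $f$ to the least $n$ with $x_\alpha(m)\in U_f$ for all $m\ge n$. This function is monotone, and by \cite[Lemma 2.3.5]{banakh} a monotone function on $\w^\w$ is bounded on some basic clopen set $\w^\w_{f{\restriction}n}$ around each $f$; taking maxima over a canonical antichain of such clopen sets produces a \emph{continuous} function $\delta_\alpha:\w^\w\to\w$ with $\delta_\alpha\ge\mu_\alpha$. The space $C_p(\w^\w,\w)$ has a countable network, hence is hereditarily separable, so the uncountable set $\{\delta_\alpha\}_{\alpha\in\w_1}$ contains a nontrivial convergent sequence $\delta_{\alpha_n}\to\delta_{\alpha_0}$, and $\Lambda=\{\alpha_n\}_{n\in\w}$ equiconverges: for $f$ with $U_f\subseteq O_x$ the values $\delta_{\alpha_n}(f)$ stabilize at $\delta_{\alpha_0}(f)\ge\mu_{\alpha_n}(f)$ for large $n$, and the finitely many remaining indices are handled directly. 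Pointwise convergence in $C_p(\w^\w,\w)$ controls every $f\in\w^\w$ simultaneously, which is exactly what your fixed countable test set cannot do.
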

\begin{proof} Let $(U_f)_{f\in\w^\w}$ be an $\w^\w$-base at $x$. To show that $X$ is $(\w_1,\w)_k$-equiconvergent at $x$, fix an indexed family $$\{x_\alpha\}_{\alpha\in\w_1}\subseteq\{s\in X^\w:\lim_{n\to\infty}s(n)=x\}$$ of sequences that converge to $x$. For every $\alpha\in\w_1$ consider the function $\mu_\alpha:\w^\w\to\w$ assigning to each $f\in\w^\w$ the smallest number $n\in\w$ such that $\{x_\alpha(m)\}_{m\ge n}\subseteq U_f.$ It is easy to see that the function $\mu_\alpha:\w^\w\to\w$ is monotone.

For every $n\in\w$ and finite function $t\in\w^n$, let $\w^\w_t=\{f\in\w^\w:f{\restriction}n=t\}.$ By  \cite[Lemma 2.3.5]{banakh}, for every $f\in\w^\w$ there exists $n\in\w$ such that $\mu_\alpha[\w^\w_{f{\restriction}n}]$ is finite. Let $T_\alpha$ be the set of all finite functions $t\in\w^{<\w}=\bigcup_{n\in\w}\w^n$ such that $\mu_\alpha[\w^\w_t]$ is finite but for any $\tau\in\w^{<\w}$ with $\tau\subset t$ the set $\mu_\alpha[\w^\w_\tau]$ is infinite. It follows from  \cite[Lemma 2.3.5]{banakh} that  for every $f\in\w^\w$ there exists a unique $t_f\in T_{\alpha}$ such that $t_f\subset  f$.

Let $\delta_\alpha(f)=\max\mu_\alpha[\w^\w_{t_f}]\ge \mu_\alpha(f)$. It is clear that the function $\delta_\alpha:\w^\w\to\w$ is continuous and hence $\delta_\alpha$ is an element of the space $C_p(\w^\w,\w)$ of continuous functions from $\w^\w$ to $\w$. Here we endow $\w^\w$ with the product topology.
The function space $C_p(\w^\w,\w)$ is endowed with the topology of poitwise convergence. By Michael's Proposition 10.4 in \cite{michael},  the  space $C_p(\w^\w,\w)$ has a countable network.

Consider the function $\delta:\w_1\to C_p(\w^\w,\w), \,\,\,\delta:\alpha\mapsto\delta_\alpha,$ and observe that $\delta_\alpha(f)\ge \mu_\alpha(f)$ for any $\alpha\in\w_1$ and $f\in\w^\w$.

Since the space $C_p(\w^\w,\w)$ has countable network, there exists a  sequence $\{\alpha_n\}_{n\in\w}\subseteq \omega_1$ of pairwise distinct ordinals such that the sequence $(\delta_{\alpha_n})_{n\in\w}$ converges to $\delta_{\alpha_0}$ in the function space $C_p(\w^\w,\w)$.
We claim that the sequence $(x_{\alpha_n})_{n\in\w}$ witnesses that $X$ is $(\w_1,\w)_k$-equiconvergent at $x$. Given any open neighborhood $O_x\subseteq X$ of $x$, find $f\in\w^\w$ such that $U_f\subseteq O_x$. Since the sequence $(x_{\alpha_0}(n))_{n\in\w}$ converges to $x$, there exists $m\in\w$ such that $\{x_{\alpha_0}(n)\}_{n\ge m}\subseteq U_f$. Since the sequence $(\delta_{\alpha_n})_{n\in\w}$ converges to $\delta_{\alpha_0}$ in $C_p(\w^\w,\w)$ we can replace $m$ by a larger number and additionally assume that $\delta_{\alpha_n}(f)=\delta_{\alpha_0}(f)$ for all $n\ge m$. Choose a number $l\ge\delta_{\alpha_0}(f)$ such that for every $n<m$ and $k\ge l$ we have $x_{\alpha_n}(k)\in O_x$. On the other hand, for every $n\ge m$ and $k\ge l$ we have $k\ge l\ge \delta_{\alpha_0}(f)=\delta_{\alpha_n}(f)\ge\mu_{\alpha_n}(f)$ and hence  $x_{\alpha_n}(k)\in U_f\subseteq O_x$.
\end{proof}

Another property implying the $(\w_1,\w)_{p}$-equiconvergence is the existence of a countable $\cs^\bullet$-network. First we introduce the necessary definitions.

Let $x$ be a point of a topological space $X$. We say that a sequence $\{x_n\}_{n\in\w}\subseteq X$ {\em accumulates at} $x$ if for each neighborhood $U\subseteq X$ of $x$ the set $\{n\in\w:x_n\in U\}$ is infinite.

A family $\mathcal N$ of subsets of  $X$ is defined to be
\begin{itemize}
\item an {\em $\mathsf s^*$-network at $x$} if for any neighborhood $O_x\subseteq X$ of $x$ and any sequence $\{x_n\}_{n\in\w}\subseteq X$ that accumulates at $x$ there exists $N\in\mathcal N$ such that $N\subseteq O_x$ and the set $\{n\in\w:x_n\in N\}$ is infinite;
\item a {\em $\mathsf{cs}^*$-network at $x\in X$} if for any neighborhood $O_x\subseteq X$ of $x$ and any sequence $\{x_n\}_{n\in\w}\subseteq X$ that converges to $x$ there exists $N\in\mathcal N$ such that $N\subseteq O_x$ and the set $\{n\in\w:x_n\in N\}$ is infinite;
\item a {\em $\mathsf{cs}^\bullet$-network at $x$} if for any neighborhood $O_x\subseteq X$ of $x$ and any sequence $\{x_n\}_{n\in\w}\subseteq X$ that converges to $x$ there exists $N\in\mathcal N$ such that $N\subseteq O_x$ and $N$ contains some point $x_n$.
\item a {\em network at $x$} if for any neighborhood $O_x\subseteq X$ the union $\bigcup\{N\in\mathcal N:N\subseteq O_x\}$ is a neighborhood of $x$;
\end{itemize}
It is clear that for any family $\mathcal N$ of subsets of a topological space $X$ and any $x\in X$ we have the following implications.
$$
\xymatrix{
\mbox{($\mathcal N$ is an $\mathsf s^*$-network at $x$)}\ar@{=>}[d]&\mbox{($\mathcal N$ is a network at $x$)}\ar@{=>}[d]\\
\mbox{($\mathcal N$ is a $\cs^*$-network at $x$)}\ar@{=>}[r]&\mbox{($\mathcal N$ is a $\cs^\bullet$-network at $x$)}
}
$$

\begin{theorem}\label{t:network} If a topological space $X$ has a countable $\cs^\bullet$-network at a point $x\in X$, then $X$ is $(\w_1,\w)_p$-equiconvergent at $x$.
\end{theorem}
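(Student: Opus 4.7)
My plan is to encode each sequence $x_\alpha$ by its incidence pattern with the countable network $\mathcal{N} = \{N_k : k \in \w\}$ and then extract the required countable subfamily via a compactness/pigeonhole argument. For each $\alpha \in \w_1$ and each $k \in \w$ set $M_{\alpha,k} = \{n \in \w : x_\alpha(n) \in N_k\}$, and define $\phi : \w_1 \to Y := \prod_{k \in \w} 2^\w$ by $\phi(\alpha) = (M_{\alpha,k})_{k \in \w}$, viewing each factor $2^\w$ as the Cantor space. Then $Y$ is a compact metrizable space. The first step is to produce an infinite sequence $(\alpha_i)_{i \in \w}$ of pairwise distinct ordinals in $\w_1$ and a point $y^* = (M^*_k)_{k \in \w} \in Y$ with $\phi(\alpha_i) \to y^*$ in $Y$. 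If $\phi(\w_1)$ is countable, pigeonhole supplies an uncountable fiber $\phi^{-1}(y^*)$ from which I choose the $\alpha_i$, so that $\phi(\alpha_i) = y^*$ is constant; if $\phi(\w_1)$ is uncountable, then as an uncountable subset of the separable metric space $Y$ it has a condensation point $y^* \in Y$, and a nested countable basic neighborhood system at $y^*$ allows me to pick distinct $\alpha_i$ with $\phi(\alpha_i) \to y^*$.

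Setting $\Lambda := \{\alpha_i : i \in \w\}$, I then verify the $(\w_1,\w)_p$-equiconvergence at $x$ directly. Fix a neighborhood $O_x$ of $x$. For each $i$, applying the $\cs^\bullet$-network property to the sequence $x_{\alpha_i} \to x$ and to $O_x$ yields a pair $(n_i, k_i) \in \w \times \w$ with $N_{k_i} \subseteq O_x$ and $x_{\alpha_i}(n_i) \in N_{k_i}$; equivalently, $n_i \in M_{\alpha_i, k_i}$. All these pairs lie in the countable set $\w \times \{k \in \w : N_k \subseteq O_x\}$, so pigeonhole furnishes a single pair $(n^*, k^*)$ with $N_{k^*} \subseteq O_x$ such that $(n_i, k_i) = (n^*, k^*)$ for infinitely many $i$; for all such $i$ we have $n^* \in M_{\alpha_i, k^*}$.

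The crux of the argument, and the step I expected to be the obstacle, is promoting ``infinitely many $i$'' to ``cofinitely many $i$''. This is where the encoding into $Y$ does its work: the coordinatewise convergence $\phi(\alpha_i) \to y^*$ gives $M_{\alpha_i, k^*} \to M^*_{k^*}$ in $2^\w$, and since $\{0,1\}$ is discrete, pointwise convergence in the Cantor topology means that for every $n \in \w$ the truth value of ``$n \in M_{\alpha_i, k^*}$'' is eventually constant, with limiting value ``$n \in M^*_{k^*}$''. Thus if $n^* \notin M^*_{k^*}$ we would have $n^* \notin M_{\alpha_i, k^*}$ for all but finitely many $i$, contradicting the infinitely many $i$ produced by pigeonhole; hence $n^* \in M^*_{k^*}$, so $x_{\alpha_i}(n^*) \in N_{k^*} \subseteq O_x$ for all but finitely many $i$. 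As the $\alpha_i$ are distinct, $\{\alpha \in \Lambda : x_\alpha(n^*) \notin O_x\}$ is finite, completing the verification with $n = n^*$.
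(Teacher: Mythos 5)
Your proof is correct and follows essentially the same route as the paper's: encode each sequence $x_\alpha$ by its interaction with the countable network as a point of a separable metrizable product space, extract a convergent sequence of pairwise distinct codes from the uncountable family, and use discreteness of the coordinates to upgrade membership to eventual (cofinite) membership. The only cosmetic differences are that the paper records just the first hitting time $\delta_\alpha(N)=\min\{n:x_\alpha(n)\in N\}\in\w+1$ rather than the full incidence set in $2^\w$, and applies the $\cs^\bullet$-network property once to the limit sequence $x_{\alpha_0}$ instead of to every $x_{\alpha_i}$ followed by a pigeonhole over the pairs $(n_i,k_i)$.
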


\begin{proof} Let $\mathcal N$ be a countable $\cs^\bullet$-network at $x$ and $$\{x_\alpha\}_{\alpha\in\w_1}\subseteq\{s\in X^\w:\lim_{n\to\infty}s(n)=x\}.$$ Endow the ordinal $\w+1=\w\cup\{\w\}$ with the discrete topology.
For every $\alpha\in \omega_1$ consider the function $\delta_\alpha:\mathcal N\to\w+1$ assigning to each $N\in\mathcal N$ the smallest number $n\in\w$ such that $x_\alpha(n)\in N$ if such number $n$ exists, and $\w$ if $x_n\notin N$ for all $n\in\w$.
Since $(\w+1)^{\mathcal N}$ is a metrizable separable space, the uncountable set $$\{\delta_\alpha\}_{\alpha\in \omega_1}\subseteq(\w+1)^{\mathcal N}$$ contains a non-trivial convergent sequence. Consequently, we can find a sequence $(\alpha_n)_{n\in\w}$ of pairwise distinct countable ordinals such that the sequence $(\delta_{\alpha_n})_{n\in\w}$ converges to $\delta_{\alpha_0}$ in the Polish space $(\w+1)^{\mathcal N}$.
We claim that the sequence $(x_{\alpha_n})_{n\in\w}$ witnesses that the space $X$ is $(\w_1,\w)_p$-equiconvergent. Fix any neighborhood $U\subseteq X$ of zero.

Since $\mathcal N$ is an $\cs^\bullet$-network, there exists $N\in\mathcal N$ and $n\in\w$ such that $x_n\in N\subseteq U$. Hence  $$d:=\delta_{\alpha_0}(N)\le n.$$ Since the sequence $(\delta_{\alpha_n})_{n\in\w}$ converges to $\delta_{\alpha_0}$, there exists $l\in\w$ such that $$\delta_{\alpha_k}(N)=\delta_{\alpha_0}(N)=d$$ for all $k\ge l$. Then for every $k\ge l$ we have
$x_{\alpha_k}(d)\in N\subseteq U$.
\end{proof}

The following  proposition (connecting $\w^\w$-bases with networks) is a corollary  of Theorem 6.4.1 in   \cite{banakh}.

 \begin{proposition}\label{p:bn} If $(U_\alpha)_{\alpha\in\w^\w}$ is an $\w^\w$-base at a point $x$ of a topological space $X$, then $(\bigcap_{\beta\in{\uparrow}\alpha}U_\beta)_{\alpha\in\w^{<\w}}$ is a countable $\mathsf s^*$-network at $x$. Here ${\uparrow}\alpha=\{\beta\in\w^\w:\alpha\subset \beta\}$ for any $\alpha\in\w^{<\w}=\bigcup_{n\in\w}\w^n$.
 \end{proposition}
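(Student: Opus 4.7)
Plan: The countability of $\mathcal N := \{V_\alpha : \alpha \in \w^{<\w}\}$, where $V_\alpha = \bigcap_{\beta \in {\uparrow}\alpha} U_\beta$, is immediate from $|\w^{<\w}| = \w$, so the substance of the proposition is the $\mathsf s^*$-network property. I would fix a neighborhood $O_x$ of $x$ and a sequence $(x_n)_{n \in \w}$ accumulating at $x$, and pick $f \in \w^\w$ with $U_f \subseteq O_x$. For any prefix $\alpha \subset f$ one has $f \in {\uparrow}\alpha$, hence $V_\alpha \subseteq U_f \subseteq O_x$; it therefore suffices to produce some such prefix $\alpha$ for which $\{n \in \w : x_n \in V_\alpha\}$ is infinite.

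The key tool is Lemma~2.3.5 of \cite{banakh}, which asserts that every monotone $\mu : \w^\w \to \w$ is ``locally bounded'' along each branch: for every $g \in \w^\w$ there is $n \in \w$ with $\mu[\w^\w_{g \restriction n}]$ finite, where $\w^\w_t := \{h \in \w^\w : h \supset t\}$. Since accumulation at $x$ makes the set $A_\beta := \{n : x_n \in U_\beta\}$ infinite for every $\beta \in \w^\w$, one can define, for each $k \in \w$, the function $\mu_k : \w^\w \to \w$ sending $\beta$ to the $k$-th smallest element of $A_\beta$. The monotonicity $\beta \leq \beta' \Rightarrow U_{\beta'} \subseteq U_\beta \Rightarrow A_{\beta'} \subseteq A_\beta$ implies $\mu_k(\beta) \leq \mu_k(\beta')$, so Lemma~2.3.5 applies and yields, for each $k$, a minimal finite prefix $t_k \subset f$ with $\mu_k[\w^\w_{t_k}]$ finite, say bounded by $\delta_k(f) \in \w$. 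This setup is a direct analogue of the construction of $T_\alpha$ and $\delta_\alpha$ in the proof of Theorem~\ref{follows}, but applied in the ``per-$k$'' direction rather than the ``per-$\alpha$'' direction.

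The main obstacle is to pass from the family of per-$k$ bounds $\{(t_k,\delta_k(f))\}_{k \in \w}$ to a single prefix $\alpha \subset f$ for which $\bigcap_{\beta \in {\uparrow}\alpha} A_\beta$ is infinite, since naive pigeonhole on the finite intervals $[0,\delta_k(f)]$ can fail: distinct $\beta \in {\uparrow}\alpha$ may realize the required $k$ elements of $A_\beta$ at entirely disjoint positions. The resolution, which is the crux of the argument, mirrors the tree-and-continuous-function machinery of Theorem~\ref{follows}: each $\delta_k$ is a continuous element of the Polish space $C_p(\w^\w,\w)$, which admits a countable network by Michael's Proposition~10.4 of \cite{michael}. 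Using this countable network one extracts a subbranch along $f$ on which the tree-data $(t_k)_k$ stabilize to a single finite prefix $\alpha \subset f$, and the stabilization lets one select — via a diagonal argument over $k$ — an infinite increasing sequence of indices $n_j$ such that $n_j \in A_\beta$ for every $\beta \in {\uparrow}\alpha$, equivalently $x_{n_j} \in V_\alpha$. This gives the required member of $\mathcal N$ contained in $O_x$ and containing infinitely many $x_n$, establishing the $\mathsf s^*$-network property.
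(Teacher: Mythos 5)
Your preliminary steps are sound: countability is immediate, the reduction to finding a prefix $\alpha\subset f$ with $\{n:x_n\in V_\alpha\}$ infinite is legitimate, the sets $A_\beta$ are infinite by accumulation, the functions $\mu_k$ are monotone, and Lemma~2.3.5 of \cite{banakh} does yield, for each $k$, a minimal prefix $t_k\subset f$ on which $\mu_k$ is bounded. But the step you yourself label ``the crux'' is asserted, not proved, and the mechanism you invoke for it does not apply. The countable-network argument in the proof of Theorem~\ref{follows} extracts a convergent subsequence from an \emph{uncountable} family $\{\delta_\alpha\}_{\alpha\in\w_1}$; your family $\{\delta_k\}_{k\in\w}$ is countable, and a countable subset of $C_p(\w^\w,\w)$ (which has a countable network but is not Polish, nor even metrizable) need not contain any nontrivial convergent subsequence --- the constant functions $g_n\equiv n$ already have none. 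Moreover, even if some subsequence of $(\delta_k)_k$ converged, that would say nothing about the prefixes $t_k$: since $\mu_k\le\mu_{k+1}$ pointwise, the $t_k$ form an increasing chain of prefixes of $f$, and the entire difficulty is to exclude $|t_k|\to\infty$; you assert this ``stabilization'' without an argument. Finally, your diagnosis of why naive pigeonhole fails is itself incorrect: if a single $\mu_k$ is bounded by $M$ on a cone ${\uparrow}\alpha$, then the sets $B_S=\{\beta\in{\uparrow}\alpha:S\subseteq A_\beta\}$, where $S$ runs over the $k$-element subsets of $[0,M]$, are finitely many downward-closed sets covering the upward-directed set ${\uparrow}\alpha$, so one of them equals ${\uparrow}\alpha$; two elements $\beta,\beta'\in{\uparrow}\alpha$ cannot realize their witnesses ``at entirely disjoint positions'' because their pointwise maximum $\gamma$ satisfies $A_\gamma\subseteq A_\beta\cap A_{\beta'}$. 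The true obstruction is the non-uniformity of $t_k$ in $k$, and nothing in your proposal addresses it.

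For context, the paper does not prove this proposition either: it is quoted as a corollary of Theorem~6.4.1 of \cite{banakh}, where the $\mathsf s^*$-network property for merely \emph{accumulating} sequences is established by a more delicate argument. It is worth noting that your scheme does close up for \emph{convergent} sequences: the single monotone function $\mu(\beta)=\min\{n\in\w:\forall m\ge n\;\; x_m\in U_\beta\}$ is bounded on some cone ${\uparrow}(f{\restriction}j)$ by Lemma~2.3.5, whence a whole tail of the sequence lies in $V_{f{\restriction}j}\subseteq O_x$. So what you have written essentially proves that $(V_\alpha)_{\alpha\in\w^{<\w}}$ is a countable $\cs^*$-network (indeed something stronger) at $x$, which is all that the paper uses downstream, since Theorem~\ref{t:network} only requires a countable $\cs^\bullet$-network. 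As a proof of the $\mathsf s^*$ statement, however, the argument has a hole exactly where the work lies.
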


As a consequence of  the results presented above  about  the $(\kappa,\lambda)_p$-equiconvergence   and the $(\kappa,\lambda)$-tall bornology for a lcs $E$,   we propose the following proof of Theorem \ref{t:main}.
\begin{proof}[Proof of Theorem \ref{t:main}]
If a lcs $E$ has an $\w^\w$-base, then by Theorem~\ref{follows},  the space $E$ is $(\w_1,\w)_k$-equiconvergent and hence $(\w_1,\w)_p$-equiconvergent. The $(\w_1,\w)_p$-equiconvergence of $E$ also follows from Proposition~\ref{p:bn} and Theorem~\ref{t:network}. Next, by Proposition~\ref{p9},  the space $E$ has $(\w_1,\w)$-tall bornology, which means that each uncountable set in $E$ contains an infinite bounded set. If $E$ has an uncountable Hamel basis $H$, then $H$ contains an infinite bounded linearly independent set, and by Theorem~\ref{t1} the space $E$ contains an infinite-dimensional compact set.
\end{proof}

\section{Radial networks  and another proof of Theorem~\ref{t:main}}\label{s:rn}

A family $\mathcal N$ of subsets of a linear topological space $E$ is called a {\em radial network} if for every neighborhood of zero $U\subseteq E$ and every every $x\in E$ there exist a set $N\in\mathcal N$ and a nonzero real number $\e$ such that $\e\cdot x\in N\subseteq U$.

The following theorem is a ``linear'' modification of Theorem~\ref{t:network}.

\begin{theorem}\label{t:rn} If a lcs $E$ has a countable radial network, then each uncountable subset in $E$ contains an infinite bounded subset.
\end{theorem}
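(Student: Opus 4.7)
The plan is to mirror the structure of the proof of Theorem~\ref{t:network}: I encode each element of the uncountable set $A\subseteq E$ by a function on $\mathcal N$ with values in a countable space, extract a pointwise convergent subsequence in the resulting compact metrizable product, and then use the radial network to convert convergence of codes into boundedness of the extracted sequence. The crucial design choice is that the code must remember not only \emph{whether} some nonzero scalar multiple of $x$ lies in $N$, but also the approximate size of an admissible scalar, because the scalars supplied by the radial network must stay uniformly bounded away from zero along the subsequence.

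Concretely, for each $x\in A$ and $N\in\mathcal N$ I would set
\[
\delta_x(N)=\min\bigl\{n\in\N:\exists\,\e\in\R\setminus\{0\},\ |\e|\ge 1/n,\ \e x\in N\bigr\},
\]
with the convention $\min\emptyset=\infty$, so that $\delta_x$ is a point of the compact metrizable space $Y=(\N\cup\{\infty\})^{\mathcal N}$. Since $A$ is uncountable and $Y$ is second-countable, a standard condensation-point argument (or, when $\delta(A)$ itself happens to be countable, pigeonhole on a fibre of the map $x\mapsto\delta_x$) produces pairwise distinct $x_0,x_1,\ldots\in A$ such that $\delta_{x_n}\to\delta_{x_0}$ in $Y$.

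It remains to verify that $B=\{x_n\}_{n\in\w}$ is bounded. Given a neighborhood $U$ of $0$ in $E$, I choose a balanced neighborhood $V\subseteq U$. The radial network applied to $x_0$ and $V$ supplies $N\in\mathcal N$ and $\e\ne 0$ with $\e x_0\in N\subseteq V$, whence $d:=\delta_{x_0}(N)\le\lceil 1/|\e|\rceil$ is finite. Pointwise convergence $\delta_{x_n}\to\delta_{x_0}$ then forces $\delta_{x_n}(N)=d$ for all large $n$, and the very definition of $\delta_{x_n}(N)$ supplies $\e_n\in\R\setminus\{0\}$ with $|\e_n|\ge 1/d$ and $\e_n x_n\in N\subseteq V$. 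Since $V$ is balanced and $|1/\e_n|\le d$, one obtains $x_n\in(1/\e_n)V\subseteq dV\subseteq dU$ eventually; absorbing the finitely many earlier $x_n$ into a larger multiple of $U$ shows $B\subseteq MU$ for some $M>0$, so $B$ is bounded.

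The main obstacle I anticipate is precisely the choice of the coding $\delta_x$. A cruder $\{0,1\}$-valued indicator, recording only whether some nonzero $\e$ with $\e x\in N$ exists, would furnish scalars $\e_n$ along the subsequence with no uniform lower bound on $|\e_n|$; the reciprocals $1/|\e_n|$ could then escape to $\infty$ and the $x_n$ would not be absorbed in any fixed multiple of $U$. Quantising the admissible scalar through the threshold $|\e|\ge 1/n$ is exactly what converts the radial network property into the uniform bound $|\e_n|\ge 1/d$, and it is this quantitative refinement that makes the argument close.
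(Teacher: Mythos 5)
Your proof is correct and follows essentially the same route as the paper: the paper's code is $\delta_\alpha(N)=\min\{n:2^{-n}\alpha\in[-1,1]\cdot N\}$, which quantizes the admissible scalar via dyadic thresholds and the balanced hull $[-1,1]\cdot N$ exactly as your threshold $|\e|\ge 1/n$ together with a balanced $V$ does, and the extraction of a convergent sequence in $(\w+1)^{\mathcal N}$ and the eventual-equality argument at the network element supplied for $x_0$ are identical. Your explicit handling of the case where the coding map has countable image is a small point the paper glosses over, but it changes nothing essential.
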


\begin{proof}  Let $\mathcal N$ be a countable radial network in $E$, and let $A$ be an uncountable set in $E$. Endow the ordinal $\w+1=\w\cup\{\w\}$ with the discrete topology.

For every $\alpha\in A$ consider the function $\delta_\alpha:\mathcal N\to\w+1$ assigning to each $N\in\mathcal N$ the ordinal
$$\delta_\alpha(N)=\min\{n\in\w+1:2^{-n}\cdot\alpha\in [-1,1]\cdot N\}.$$
Here we assume that $2^{-\w}=0$.

Since $(\w+1)^{\mathcal N}$ is a metrizable separable space, the uncountable set $\{\delta_\alpha\}_{\alpha\in A}\subseteq(\w+1)^{\mathcal N}$ contains a non-trivial convergent sequence. Consequently, we can find a sequence $\{\alpha_n\}_{n\in\w}\subseteq A$ of pairwise distinct points of $A$ such that the sequence $(\delta _{\alpha_n})_{n\in\w}$ converges to $\delta_{\alpha_0}$ in the Polish space $(\w+1)^{\mathcal N}$.

We claim that the set $\{\alpha_n\}_{n\in\w}$ is bounded in $X$. Fix any neighborhood $U\subseteq X$ of zero.

Since $\mathcal N$ is a radial network, there exist a set $N\in\mathcal N$ and a nonzero real number $\e$ such that $\e\cdot\alpha_0\in N\subseteq U$. Then $d:=\delta_{\alpha_0}(N)\in\w$. Since the sequence $(\delta_{\alpha_n})_{n\in\w}$ converges to $\delta_{\alpha_0}$, there exists $l\in\w$ such that $\delta_{\alpha_k}(N)=\delta_{\alpha_0}(N)$ for all $k\ge l$. Then for every $k\ge l$ we have
$$2^{-d}\cdot\alpha_k\in [-1,1]\cdot N\subseteq[-1,1]\cdot U$$ and hence
$\{\alpha_k\}_{k\ge l}\subseteq [-2^d,2^d]\cdot U$, which implies that the family $(\alpha_n)_{n\in\w}$ is bounded in $X$.
\end{proof}

The implication $(1)\Ra(7)$ in the following theorem provides an alternative proof of  Theorem~\ref{t:main}, announced in the introduction.

\begin{theorem}\label{t:main3} For a lcs $E$ consider the following properties:
\begin{enumerate}
\item $E$ has an $\w^\w$-base;
\item $E$ has a countable $\mathsf s^*$-network at zero;
\item $E$ has a countable $\cs^*$-network at zero;
\item $E$ has a countable $\cs^\bullet$-network at zero;
\item $E$ has a countable radial network at zero;
\item each uncountable set in $E$ contains an infinite bounded subset;
\item $E$ contains an infinite-dimensional compact set.
\end{enumerate}
Then $(1)\Ra(2)\Ra(3)\Ra(4)\Ra(5)\Ra(6)$. If $E$ has uncountable Hamel basis, then $(6)\Ra(7)$.
\end{theorem}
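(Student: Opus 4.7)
The plan is to chain through the seven implications using results already established in the excerpt; almost every step reduces to quoting a previous proposition or reading off a definition.

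For $(1)\Ra(2)$ I would invoke Proposition~\ref{p:bn} verbatim: given an $\w^\w$-base $(U_\alpha)_{\alpha\in\w^\w}$ at zero, the countable family $\{\bigcap_{\beta\in{\uparrow}\alpha}U_\beta:\alpha\in\w^{<\w}\}$ is an $\mathsf s^*$-network at zero. The implications $(2)\Ra(3)\Ra(4)$ are then read straight off the implication diagram displayed after the network definitions: every convergent sequence is accumulating, so each $\mathsf s^*$-network at zero is a $\cs^*$-network at zero; and ``infinitely many $x_n\in N$'' trivially implies ``some $x_n\in N$'', so each $\cs^*$-network is a $\cs^\bullet$-network.

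For $(4)\Ra(5)$, given a countable $\cs^\bullet$-network $\mathcal N$ at zero, a vector $x\in E$, and a neighborhood $U\subseteq E$ of zero, I would feed the $\cs^\bullet$-network the sequence $(2^{-n}x)_{n\in\w}$, which converges to zero by continuity of scalar multiplication. This yields $N\in\mathcal N$ and $n\in\w$ with $2^{-n}x\in N\subseteq U$, so $\e=2^{-n}\neq 0$ witnesses the radial network property. The implication $(5)\Ra(6)$ is then exactly the content of Theorem~\ref{t:rn}.

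Finally, assuming $E$ has an uncountable Hamel basis $H$, I would prove $(6)\Ra(7)$ by applying $(6)$ to the uncountable set $H$ to extract an infinite bounded subset $B\subseteq H$. Since $B$ lies in a Hamel basis it is linearly independent, so $E$ contains an infinite bounded linearly independent set; feeding this into the explicit construction used in the proof of $(1)\Ra(2)$ of Theorem~\ref{t1} (the set $K=\bigcup_{n\in\w}\{\sum_{k=n}^{2n}t_kx_k:(t_k)\in\prod_{k=n}^{2n}[0,2^{-k}]\}$) produces the desired infinite-dimensional compact subset of $E$. There is no genuine obstacle here: the nontrivial work resides in the previously proved Proposition~\ref{p:bn}, Theorem~\ref{t:rn}, and Theorem~\ref{t1}, and the only new observations are the choice of the test sequence $(2^{-n}x)_{n\in\w}$ in step $(4)\Ra(5)$ and the remark that a subset of a Hamel basis is linearly independent in step $(6)\Ra(7)$.
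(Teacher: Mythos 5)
Your proposal is correct and follows essentially the same route as the paper: Proposition~\ref{p:bn} for $(1)\Ra(2)$, the trivial implications for $(2)\Ra(3)\Ra(4)$, the observation that a $\cs^\bullet$-network at zero is a radial network for $(4)\Ra(5)$ (you usefully make explicit the test sequence $(2^{-n}x)_{n\in\w}$, which the paper leaves implicit), Theorem~\ref{t:rn} for $(5)\Ra(6)$, and Theorem~\ref{t1} applied to an infinite bounded subset of the Hamel basis for $(6)\Ra(7)$. No gaps.
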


\begin{proof} The implication $(1)\Ra(2)$ follows from Proposition~\ref{p:bn}. The implications $(2)\Ra(3)\Ra(4)$ are trivial and $(4)\Ra(5)$ follows from the observation that every $\cs^\bullet$-network at zero in the space $E$ is a radial network for $E$. The implication $(5)\Ra(6)$ is proved by  Theorem~\ref{t:rn}.

If $E$ has an uncountable Hamel basis $H$, then by (6), there exists an infinite bounded set $B\subseteq H$. By Theorem~\ref{t1}, the space $E$ contains an infinite-dimensional compact set.
\end{proof}

\begin{problem} Is there an lcs $E$ that has a countable radial network but does not have a countable $\cs^\bullet$-network at zero?
\end{problem}


 \section{Applications to spaces $C_{p}(X)$}\label{s:Cp}

A family $\{B_{\alpha}:\alpha\in\omega^\omega\}$  of bounded (compact) sets  covering  a lcs $E$  is called a \emph{bounded (compact) resolution}    if $B_{\alpha}\subseteq  B_{\beta}$ for each $\alpha\leq\beta$. If additionally every bounded (compact)  subset of $E$ is contained in some $B_{\alpha}$,  we call the family $\{B_{\alpha}:\alpha\in\omega^\omega\}$  a \emph{fundamental bounded (compact) resolution} of $E$.
\smallskip

\begin{example} Let $E$ be a  metrizable  lcs  with a decreasing countable base $(U_{n})_{n\in\w}$ of absolutely convex neighbourhoods of zero. For $\alpha=(n_{k})_{k\in\w}\in\omega^{\omega}$ put
$B_{\alpha}=\bigcap_{k\in\w}n_{k}U_{k}$ and observe that  $\{B_{\alpha}: \alpha\in\omega^{\omega}\}$  is a fundamental bounded resolution in $E$.
\end{example}

 A Tychonoff space $X$ is called {\em pseudocompact} if each continuous real-valued function on $X$ is bounded.

The first part of the following (motivating) result has been proved in \cite{KSMZ}; since this is  not published yet, we add  a short proof.
\begin{proposition}\label{pseudocom}
For a Tychonoff space $X$ the following assertions are equivalent:
\begin{enumerate}
\item The space  $C_{k}(X)$ is covered by a sequence of bounded sets.
\item The space $C_{p}(X)$ is covered by a sequence of bounded sets.
\item   $X$  is pseudocompact.
\end{enumerate}
Moreover, the following assertions are equivalent:
\begin{enumerate}
\item[(4)] $C_{p}(X)$  is covered by a sequence of bounded sets but is  not covered by a sequence of functionally bounded sets.
\item[(5)]   $X$ is pseudocompact and contains a countable  subset which is not closed in $X$ or not $C^{*}$-embedded in $X$.
\end{enumerate}
\end{proposition}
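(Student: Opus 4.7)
The plan is to treat the two equivalences separately.

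For $(1)\Leftrightarrow(2)\Leftrightarrow(3)$ I would run the cycle $(3)\Ra(1)\Ra(2)\Ra(3)$. For $(3)\Ra(1)$: if $X$ is pseudocompact then the sup-norm balls $B_n=\{f\in C(X):\sup_{x\in X}|f(x)|\le n\}$ cover $C(X)$, and each is uniformly bounded on every compact subset of $X$, hence bounded in $C_k(X)$. The implication $(1)\Ra(2)$ is immediate from the continuity of the identity map $C_k(X)\to C_p(X)$. For $(2)\Ra(3)$ I argue contrapositively: if $X$ is not pseudocompact, the classical characterization of pseudocompactness provides a countable closed $C$-embedded discrete subset $D=\{x_n\}_{n\in\w}\subseteq X$, so the restriction map $R\colon C_p(X)\to C_p(D)=\IR^\w$ is continuous and surjective. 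A $\sigma$-bounded decomposition $C_p(X)=\bigcup_n A_n$ then gives $\IR^\w=\bigcup_n R(A_n)$ with each $R(A_n)$ contained in some box $\prod_k[-r^{(n)}_k,r^{(n)}_k]$; the diagonal sequence $a_k=r^{(k)}_k+1$ belongs to no $R(A_n)$, a contradiction.

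For $(4)\Leftrightarrow(5)$, by part one both sides force $X$ to be pseudocompact, so I may assume this throughout. For $(5)\Ra(4)$ I would split on the witness $S\subseteq X$: if $S$ is not closed, pick $p\in\overline{S}\setminus S$ and construct a continuous $\Phi\colon C_p(X)\to\IR$ unbounded on every functionally bounded subset, by exploiting the fact that the joint evaluation $f\mapsto (f{\restriction}S,f(p))$ must reconcile the value at $p$ with the accumulation along $S$; if $S$ is closed but not $C^*$-embedded, a bounded $g\in C(S)$ with no continuous extension to $X$ yields an analogous obstructing $\Phi$ built from finite-sample approximations of $g$ by restrictions of elements of $C_p(X)$. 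For $(4)\Ra(5)$, working contrapositively: if every countable $S\subseteq X$ is closed and $C^*$-embedded then, using the identification $\upsilon C_p(X)=C_p(\upsilon X)=C_p(\beta X)$ valid for pseudocompact $X$, I would show that each $B_n$ from part one has compact pointwise closure in $C_p(\beta X)$ (hence is functionally bounded in $C_p(X)$), so $C_p(X)=\bigcup_n B_n$ becomes a $\sigma$-functionally bounded cover, contradicting the second clause of (4).

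The principal obstacle is the implication $(5)\Ra(4)$, specifically the explicit construction of the continuous obstruction $\Phi\colon C_p(X)\to\IR$: since continuous scalar functions on $C_p(X)$ need not depend on only finitely many evaluations, the construction must go through Tietze-type extensions in $X$ combined with the characterization of functional boundedness as relative compactness in $\upsilon C_p(X)=C_p(\beta X)$. In the ``not closed'' sub-case the obstructing $\Phi$ arises from the mismatch between the value at $p$ and accumulation along $S$, while in the ``not $C^*$-embedded'' sub-case it arises from the Stone--\v Cech extension of the non-extendable $g$ detecting a point of $\beta X\setminus X$ at which the pointwise closure of $B_n$ loses continuity.
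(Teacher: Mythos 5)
Your treatment of the first equivalence is correct. The cycle $(3)\Ra(1)\Ra(2)\Ra(3)$ matches the paper's, and your $(2)\Ra(3)$ is a self-contained variant of the paper's argument: the paper invokes Arkhangel'skii's theorem that $C_p(X)$ over a non-pseudocompact $X$ contains a complemented copy of $\IR^\w$ (which cannot be $\sigma$-bounded, being non-$\sigma$-compact), whereas you extract the same countable closed discrete $C$-embedded set $D$ that underlies that theorem and use the surjective restriction operator $R:C_p(X)\to\IR^\w$ directly, finishing with a diagonal argument. Your route is more elementary and avoids the external citation; the paper's is shorter. Either is acceptable.

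The second equivalence is where your proposal has a genuine gap. The paper disposes of $(4)\Leftrightarrow(5)$ in one line by citing Tkachuk's Problem 399, which states precisely that $C_p(X)$ is covered by a sequence of functionally bounded sets if and only if $X$ is pseudocompact and every countable subset of $X$ is closed and $C^*$-embedded; combined with $(2)\Leftrightarrow(3)$ this is the whole proof. You instead attempt to reprove this characterization, and the sketch does not go through as written. Most seriously, in $(5)\Ra(4)$ you propose to ``construct a continuous $\Phi:C_p(X)\to\IR$ unbounded on every functionally bounded subset'': no such $\Phi$ can exist, since by the very definition of functional boundedness every continuous real-valued function on $C_p(X)$ is bounded on every functionally bounded subset. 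What is actually required is to show that no countable family of functionally bounded sets can cover $C_p(X)$, i.e., given functionally bounded $F_0,F_1,\dots$ one must produce an element of $C(X)$ lying outside $\bigcup_n F_n$ (or, dually, derive from a putative cover that every countable $S\subseteq X$ is closed and $C^*$-embedded); a single obstructing functional cannot do this. The converse direction $(4)\Ra(5)$ is likewise only gestured at: the claim that each sup-norm ball $B_n$ has compact closure in $C_p(\beta X)$ under the hypothesis on countable subsets, and the identification $\upsilon C_p(X)=C_p(\upsilon X)$, are nontrivial facts that you assume rather than establish. Unless you intend to cite Tkachuk's result as the paper does, this half of the proposition remains unproved.
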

\begin{proof}
(1) $\Rightarrow$ (2) is clear.
(2) $\Rightarrow$ (3): Assume $C_{p}(X)$ is  covered by a sequence of bounded sets  but  $X$ is not psudocompact. Then  $C_{p}(X)$ contains a complemented copy of $\mathbb{R}^{\omega}$, see \cite{arkh4}. But  $\mathbb{R}^{\omega}$ cannot be covered by a sequence of bounded sets, otherwise would be $\sigma$-compact.
(3) $\Rightarrow$ (1): If $X$ is pseudocompact, then for every $n\in\IN$ the set $B_n=\{f\in C(X):\sup_{x\in X} |f(x)|\leq n\}$ is bounded in $C_k(X)$ and $\bigcup_{n\in\IN}B_n=C_k(X)$.
\smallskip

  The equivalence $(4)\Leftrightarrow(5)$ follows from \cite[Problem 399]{Tka1}:  $C_{p}(X)$ is covered by a sequence of functionally bounded subsets o $C_{p}(X)$ if and only if  $X$ is pseudocompact and every countable subset   of $X$ is closed and $C^{*}$-embedded in $X$. \end{proof}
\begin{example}
 $C_{p}([0,\omega_{1}))$ is covered by a sequence of bounded sets but is not covered by a sequence of functionally bounded sets.
\end{example}

By \cite{ferrando}, $C_{p}(X)$ has a bounded resolution if and only if there exists a $K$-analytic space $L$ such that $C_{p}(X)\subseteq L\subseteq\mathbb{R}^{X}$.
The problem  when $C_{p}(X)$ has a fundamental bounded resolution is easier.
As a simple  application of Theorem \ref{t:main} we prove the following

\begin{proposition}\label{cp}
For a Tychonoff space $X$ consider the following assertions:
\begin{enumerate}
\item $C_{p}(X)$ admits  a fundamental bounded resolution $\{B_{\alpha}: \alpha\in\omega^{\omega}\}$.
\item  $X$ is countable.
\item  $\mathbb{R}^{X}=\bigcup_{\alpha\in\omega^{\omega}}\overline{B_{\alpha}}^{\mathbb{R}^{X}}$ for a fundamental bounded resolution $\{B_{\alpha}: \alpha\in\omega^{\omega}\}$ in $C_{p}(X)$.
\item The strong (topological) dual $L_{\beta}(X)$  of $C_{p}(X)$  is a cosmic space, i.e. a continuous image of a metrizable separable space.
\item $C_{p}\left( X\right) $ is a large subspace of
$\mathbb{R}^{X}$, i.e. for every mapping $f \in \R^X$ there is a bounded set $B \subseteq C_p(X)$ such that
$f \in\overline{B}^{\mathbb{R}^X}$.
\end{enumerate}
Then $(1)\Leftrightarrow(2)\Leftrightarrow(3)\Leftrightarrow (4)\Rightarrow(5)$ but  $(5)\Rightarrow(2)$  fails even  for compact spaces $X$.
\end{proposition}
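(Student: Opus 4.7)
The plan is to establish the cycle $(2)\Rightarrow(1)\Rightarrow(4)\Rightarrow(2)$, then insert $(3)$ via $(2)\Rightarrow(3)\Rightarrow(1)$, deduce $(2)\Rightarrow(5)$ from $(3)$, and finally exhibit a compact uncountable $X$ for which $(5)$ holds but $(2)$ fails. The implication $(2)\Rightarrow(1)$ is immediate from the preceding Example, since countability of $X$ makes $C_p(X)\subseteq\mathbb{R}^\omega$ metrizable. For $(1)\Rightarrow(4)$ I pass to the strong dual: the polars $\{B_\alpha^\circ\}_{\alpha\in\omega^\omega}$ of a fundamental bounded resolution of $C_p(X)$ form an $\omega^\omega$-base of absolutely convex neighborhoods of zero in $L_\beta(X)$. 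Proposition~\ref{p:bn} then yields a countable $\mathsf{s}^*$-network of $L_\beta(X)$ at zero, and combining this with the natural decomposition $L(X)=\bigcup_{n\in\omega}L_n(X)$ into (metrizable) finite-dimensional subspaces, by translating the network at zero along a countable dense subset of a cosmic generator, one extracts a countable network for $L_\beta(X)$, so it is cosmic.

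The content of $(4)\Rightarrow(2)$ is that the Dirac map $\delta\colon X\to L_\beta(X)$, $x\mapsto\delta_x$, is a uniform embedding onto a closed discrete subspace. Indeed, $B=\{f\in C(X):\sup_X|f|\leq 1\}$ is pointwise bounded and hence bounded in $C_p(X)$, and by the Tychonoff property for any distinct $x,y\in X$ there is $f\in B$ with $|f(y)-f(x)|=1$, whence the seminorm of the polar $B^\circ$ satisfies $p_B(\delta_y-\delta_x)\geq 1$. Since the cosmic property passes to subspaces, the discrete image $\delta(X)$ is cosmic; a discrete cosmic space is countable, so $|X|\leq\aleph_0$.

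For $(2)\Rightarrow(3)$, enumerate $X=\{x_n\}_{n\in\omega}$ and put $B_\alpha=\{g\in C(X):|g(x_n)|\leq\alpha(n)\text{ for all }n\}$; these clearly form a fundamental bounded resolution of $C_p(X)$, since every bounded set in $C_p(X)$ is pointwise bounded. Given $f\in\mathbb{R}^X$, choose $\alpha(n)\geq|f(x_n)|+1$; for each finite $F\subseteq X$ one produces $g_F\in B_\alpha$ with $g_F|_F=f|_F$ by extending $f|_F$ through the Tychonoff/Tietze property and multiplying by a cutoff $\phi_F$ supported on a neighborhood of $F$ small enough that the pointwise bound $\alpha$ is respected (this is where $\alpha$ must be taken sufficiently large). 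Hence $f\in\overline{B_\alpha}^{\mathbb{R}^X}$. The implication $(3)\Rightarrow(1)$ is immediate, and $(2)\Rightarrow(5)$ is a direct consequence of $(3)$.

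For the counterexample to $(5)\Rightarrow(2)$, let $X=D\cup\{\infty\}$ be the one-point compactification of an uncountable discrete space $D$; thus $X$ is compact and $|X|>\aleph_0$, so $(2)$ fails. Given $f\in\mathbb{R}^X$, set $M(d)=|f(d)|+|f(\infty)|+1$ for $d\in D$ and $M(\infty)=2|f(\infty)|+1$, and let $B=\{g\in C(X):|g(x)|\leq M(x)\text{ for all }x\in X\}$, which is pointwise bounded and hence bounded in $C_p(X)$. For each finite $F\subseteq X$, define $g_F(x)=f(x)$ for $x\in F$, $g_F(d)=f(\infty)$ for $d\in D\setminus F$, and $g_F(\infty)=f(\infty)$; continuity at $\infty$ is automatic because $g_F$ equals $f(\infty)$ off the finite set $F$, and the pointwise bounds $|g_F(x)|\leq M(x)$ are easily checked, so $g_F\in B$. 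Hence $f\in\overline{B}^{\mathbb{R}^X}$, establishing $(5)$. The main obstacle is the step $(1)\Rightarrow(4)$, where the countable $\mathsf{s}^*$-network at zero must be upgraded to a genuinely countable network on the whole $L_\beta(X)$; the second delicate point is the cutoff construction in $(2)\Rightarrow(3)$, which must balance the interpolation condition against the prescribed pointwise bound.
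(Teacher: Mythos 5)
There is a genuine gap at the step $(1)\Rightarrow(4)$, and since that is your only route from $(1)$ back to $(2)$, the equivalences $(1)\Leftrightarrow(2)\Leftrightarrow(4)$ are not established. Your plan is to upgrade the countable $\mathsf s^*$-network of $L_\beta(X)$ at zero to a countable network of the whole space ``by translating the network at zero along a countable dense subset''. But $L_\beta(X)$ has a countable dense subset only if $X$ is countable (the linear span of any countable subset of $X$ is a proper closed subspace when $X$ is uncountable), so this presupposes exactly the conclusion $(2)$ you are trying to reach; also the sets $L_n(X)$ of words of length $\le n$ are not finite-dimensional for infinite $X$. The paper closes this gap differently, and this is the whole point of the proposition: from the $\w^\w$-base $\{B_\alpha^\circ\}$ of $L_\beta(X)$ one invokes Theorem~\ref{t:main} --- every \emph{uncountably-dimensional} lcs with an $\w^\w$-base contains an infinite-dimensional compact set --- together with the fact (cited from \cite{fe-ka-sa}) that all bounded sets in $L_\beta(X)$ are finite-dimensional; this forces $\dim L_\beta(X)=|X|\le\w$, i.e.\ $(1)\Rightarrow(2)$ directly, and then $(2)\Rightarrow(4)$ is done via Michael's result that a space with a fundamental sequence of compact sets and a countable network structure is an $\aleph_0$-space. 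Your proof never uses Theorem~\ref{t:main} and has no working substitute for it.

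A secondary flaw: in $(2)\Rightarrow(3)$ your choice $\alpha(n)\ge|f(x_n)|+1$ is not sufficient whenever $X$ has non-isolated points. Take $X=\{0\}\cup\{1/n:n\ge1\}$ and $f=100\cdot\chi_{\{0\}}$: every $g\in B_\alpha$ satisfies $|g(1/n)|\le 1$, hence $|g(0)|\le 1$ by continuity, so $f\notin\overline{B_\alpha}^{\IR^X}$. No cutoff can repair this for that $\alpha$; the bound at the points accumulating at $0$ must be enlarged to at least $|f(0)|$. The paper avoids the issue entirely: since $X$ is countable, $\IR^X$ is metrizable and $C_p(X)$ is dense in it, so some \emph{sequence} $g_k\to f$ pointwise; $\{g_k\}$ is pointwise bounded, hence lies in some member of the fundamental resolution. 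On the positive side, your $(4)\Rightarrow(2)$ (uniform discreteness of $\delta(X)$ via the polar of the unit ball of $C(X)$, plus hereditariness of cosmicness) is a correct and pleasantly self-contained alternative to the paper's citation of \cite{fe-ka}, and your counterexample to $(5)\Rightarrow(2)$ --- the one-point compactification of an uncountable discrete space, with the explicit net $g_F$ --- is correct and more elementary than the paper's appeal to Eberlein scattered compacta from \cite{FKLS}.
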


The implication $(1)\Ra(2)$ was recently  proved by Ferrando, Gabriyelyan and K\c akol  \cite{df} (with the help of $\cs^{*}$-networks). We will derive this implication from  Theorem \ref{t:main}.

\begin{proof}
(1) $\Rightarrow$ (2): If $C_{p}(X)$ has a fundamental bounded resolution $\{B_{\alpha}:\alpha\in\w^{\w}\}$, then the sets $U_{\alpha}=\{\xi\in L_{\beta}(X): \sup_{f\in B_\alpha}|\xi(f)|\leq 1\}$ form an $\omega^{\omega}$-base in
$L_{\beta}(X)$. By \cite{fe-ka-sa}, every bounded set in $L_\beta(X)$ is finite-dimensional. Applying Theorem \ref{t:main}, we conclude that the Hamel basis $X$ of the lcs $L_{\beta}(X)$ is countable. (2) $\Rightarrow$ (1) is clear.
(2) $\Rightarrow$ (3)$\wedge$(5): Since  $C_{p}(X)$ is dense in  the  metrizable  space $\mathbb{R}^X$, the claims hold. (2) $\Rightarrow$ (4): If $X$ is countable, then  $L_{\beta}(X)$ has a fundamental sequence of compact sets covering $L_{\beta}(X)$ and \cite[Proposition 7.7]{michael} implies that $L_{\beta}(X)$ is an $\aleph_{0}$-space, hence cosmic. (4)  $\Rightarrow$  (2): If  $L_{\beta}(X)$ is cosmic, then it is separable, and
\cite[Corollary 2.5]{fe-ka} shows that $X$ is countable. (5) $\nRightarrow$ (2):   $C_{p}(X)$ over every Eberlein scattered,  compact $X$ satisfies (5), see \cite{FKLS}.
\end{proof}

 Item (5) in Proposition~\ref{cp} is strictly connected with the following result.

\begin{theorem}{\rm (\cite{FKLS}, \cite{fe-ka})}\label{chara}
For a Tychonoff space $X$, the following conditions are equivalent{\rm:}
\begin{enumerate}
\item[{\rm (i)}] $C_p(X)$ is distinguished, i.e. the strong dual $L_{\beta}(X)$ of the space $C_{p}(X)$ is bornological.
\item [{\rm (ii)}] The strong dual $L_{\beta}(X)$ of the space $C_{p}(X)$ is a Montel space.
\item[{\rm (iii)}]  $C_{p}\left( X\right) $ is a large subspace of
$\mathbb{R}^{X}$.
\item[{\rm (iv)}] The strong dual $L_{\beta}(X)$ of the space $C_{p}(X)$ carries the finest locally convex topology.
\end{enumerate}
\end{theorem}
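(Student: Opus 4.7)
The plan is to pivot on the foundational fact (cited from \cite{fe-ka-sa} and already invoked in the proof of Proposition~\ref{cp}) that every bounded subset of $L_\beta(X)$ is finite-dimensional. With this in hand, Theorem~\ref{t1} produces a bornological isomorphism between $L_\beta(X)$ and the free lcs on the underlying Hamel basis $\{\delta_x:x\in X\}$, and conditions (i)--(iv) become various equivalent ways of asking when this bornological isomorphism is in fact a topological one. Writing $V=\mathrm{span}\{\delta_x:x\in X\}$, $\tau_\beta$ for the strong topology and $\tau_{\max}$ for the finest lcs topology on $V$, the identity $(V,\tau_{\max})\to(V,\tau_\beta)$ is always continuous, so $\tau_\beta\le\tau_{\max}$, and the whole theorem boils down to pinning down when equality holds.

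First I would dispatch (iv)$\Rightarrow$(i)$\wedge$(ii) directly: the finest locally convex topology is always bornological and barrelled, and combined with finite-dimensional bounded sets it is Montel, since closed bounded sets lie in finite-dimensional subspaces where Heine--Borel gives relative compactness. The converse (i)$\Rightarrow$(iv) is precisely Theorem~\ref{t1}(4): if $L_\beta(X)$ is bornological then the bounded identity map $L_\beta(X)\to(V,\tau_{\max})$ is continuous, forcing $\tau_\beta=\tau_{\max}$.

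For (ii)$\Rightarrow$(i) I would argue via reflexivity and duality. A Montel lcs is (semi)reflexive, and in the dual pairing $\langle V,C_p(X)\rangle$ one needs to identify the strong bidual of $L_\beta(X)$ with $(V,\tau_{\max})$; since bounded subsets of $L_\beta(X)$ live in finite-dimensional subspaces, their polars are precisely absolutely convex absorbing subsets of $V$, so the strong bidual topology collapses into $\tau_{\max}$, yielding (iv) and hence (i). The equivalence (iii)$\Leftrightarrow$(iv) is a polar-duality computation in the pairing $\langle V,\mathbb{R}^X\rangle$: the largeness of $C_p(X)$ inside $\mathbb{R}^X$ is dual, via polars and bipolars, to the statement that polars of bounded subsets of $C_p(X)$ form a fundamental system of neighborhoods for the finest locally convex topology on $V$, which is exactly (iv).

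The main obstacle will be (ii)$\Rightarrow$(i). Although the other implications follow either from Theorem~\ref{t1} or from routine polar manipulations, showing that Montel alone forces bornologicality here requires delicately identifying the strong bidual of $L_\beta(X)$ and carefully matching its bounded sets across the dual pair. A clean alternative route would be to prove directly that barrelledness (which Montel supplies) plus finite-dimensional bounded sets implies every absolutely convex absorbing set in $V$ is a neighborhood of zero for $\tau_\beta$; but this too requires extra work, since the barrel axiom only delivers \emph{closed} such sets, while the finest locally convex topology demands \emph{all} absolutely convex absorbing sets to be neighborhoods.
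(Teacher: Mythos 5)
First, a point of reference: the paper does not prove Theorem~\ref{chara} at all --- it is imported from \cite{FKLS} and \cite{fe-ka} and used as a black box --- so there is no in-paper argument to measure yours against, and your proposal must stand on its own. Judged that way, the parts resting on Theorem~\ref{t1} are sound: since every bounded subset of $L_\beta(X)$ is finite-dimensional (by \cite{fe-ka-sa}), the equivalence (i)$\Leftrightarrow$(iv) is exactly Theorem~\ref{t1}, (3)$\Leftrightarrow$(4), applied to $E=L_\beta(X)$, and (iv)$\Rightarrow$(ii) is correct (the finest locally convex topology is barrelled, and a closed bounded set, being closed and bounded in a finite-dimensional subspace, is compact). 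The implication (iii)$\Rightarrow$(iv) is also true but is not the purely formal bipolar computation you describe: largeness places each \emph{single} $f\in\mathbb{R}^X$ in the closure of a bounded set, while (iv) requires each pointwise-bounded \emph{box} $\prod_{x}[-f(x),f(x)]$ (these are the polars of the $\tau_{\max}$-neighborhoods) to lie in the bipolar of one bounded set; one needs an extra step, e.g.\ replacing $B$ by $\{ug: g\in B,\ u\in C(X),\ \|u\|_\infty\le 1\}$, which is still pointwise bounded, to pass from the point to the box.

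The genuine gap is (ii)$\Rightarrow$(i), exactly where you locate it, and your proposed route does not close it. Because bounded subsets of $L_\beta(X)$ are finite-dimensional, they are automatically relatively compact, so $L_\beta(X)$ is \emph{always} semi-Montel and semi-reflexive; thus (ii) reduces to ``$L_\beta(X)$ is barrelled'', and reflexivity is then free. What reflexivity yields is $\tau_\beta=\beta(L(X),W)$, where $W=(L_\beta(X))'$ is the topological dual. Your step ``the strong bidual topology collapses into $\tau_{\max}$'' silently assumes $W=\mathbb{R}^X$, i.e.\ that every linear functional on $L(X)$ is $\tau_\beta$-continuous. But $f\in\mathbb{R}^X$ is $\tau_\beta$-continuous precisely when $f$ lies in the closed absolutely convex hull, in $\mathbb{R}^X$, of some bounded subset of $C_p(X)$ --- which is condition (iii), the very statement you are driving at. So the duality route is circular: the entire nontrivial content of (ii)$\Rightarrow$(iv) is that barrelledness of $L_\beta(X)$ (equicontinuity of pointwise-bounded subsets of $W$) forces $W=\mathbb{R}^X$, and neither your bidual argument nor your fallback (``barrelled plus finite-dimensional bounded sets implies the finest locally convex topology'', which you correctly note is obstructed because barrels must be closed) supplies a proof of that. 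This is the step for which the external arguments of \cite{FKLS} and \cite{fe-ka} are genuinely needed.
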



The following is a linear counterpart to item (4) in Proposition \ref{cp}.
\begin{remark}
A Tychonoff space $X$ is finite if and only if $L_{\beta}(X)$ is a continuous linear image of a metrizable lcs.
\end{remark}
Indeed, if  $X$ is finite, nothing  is left to prove. Conversely, assume that $L_{\beta}(X)$ is a continuous linear image of a metrizable lcs $E$ (by a one-to-one map). But $L_{\beta}(X)$ has only finite-dimensional bounded sets and $E$ fails this property. Hence $X$ is finite.
\newpage

\end{document}